\def\ie{\emph{i.e., }}
\def\eg{\emph{e.g., }}
\def\R{\mathbb R}
\def\Z{\mathbb Z}
\def\U{\mathcal U}
\def\V{\mathcal V}
\def\W{\mathcal W}
\def\P{\EuScript P}
\def\C{\EuScript C}
\def\T{\mathbb T}
\def\qqZ{\left(\frac1q\Z\right)^n}
\def\qZ{\frac1q\Z^n}
\def\E{\EuScript E}
\def\AA{\EuScript A}
\def\A{\mathbb A}
\def\TT{\mathcal T}
\newtheorem{theorem}{Theorem}[section]
\newtheorem{prop}[theorem]{Proposition}
\newtheorem{claim}[theorem]{Claim}
 \theoremstyle{remark}
\newtheorem{remark}[theorem]{Remark}
\theoremstyle{remark}
\begin{document}
\author{ F. Thomas Farrell  and Andrey Gogolev$^\ast$}
\title[Anosov diffeomorphisms]{Anosov diffeomorphisms constructed
from $\boldsymbol{\pi}_k(\bf\textup{Diff}(S^n))$}
\thanks{$^\ast$Both authors were partially supported by NSF grants.}
\begin{abstract}
We construct Anosov diffeomorphisms on manifolds that are
homeomorphic to infranilmanifolds yet have exotic smooth structures.
These manifolds are obtained from standard infranilmanifolds by
connected summing with certain exotic spheres. Our construction
produces Anosov diffeomorphisms of high codimension on infranilmanifolds
with irreducible exotic smooth structures.
\end{abstract}
\date{}
 \maketitle

\section{Introduction}
Let $M$ be a compact smooth $n$-dimensional Riemannian manifold.
Recall that a diffeomorphism $f$ is called {\it Anosov} if there
exist constants $\lambda \in (0,1)$ and $C>0$ along with a
$df$-invariant splitting $TM=E^s\oplus E^u$ of the tangent bundle of
$M$, such that for all $m \ge 0$
\begin{multline*}
\qquad\|df^mv\|\le C\lambda^m\|v\|,\;v\in E^s,\; \\
\qquad\shoveleft{\|df^{-m}v\|\le C\lambda^{m}\|v\|,\;v\in E^u.
\hfill}
\end{multline*}

If either fiber of $E^s$ or $E^u$ has dimension $k$ with $k\le\lfloor n/2\rfloor$ then $f$ is
called a {\it codimension $k$} Anosov diffeomorphism. See,
\eg~\cite{KH} for background on Anosov diffeomorphisms.

The classification of Anosov diffeomorphisms is an outstanding open
problem. All currently known examples of manifolds that support
Anosov diffeomorphisms are homeomorphic to infranilmanifolds.
It is an interesting question to study existence of Anosov diffeomorphisms on
manifolds that are homeomorphic to infranilmanifolds yet have exotic smooth structure.

Farrell and Jones have constructed~\cite{FJ} codimension one Anosov
diffeomorphisms on higher dimensional exotic tori, \ie manifolds that
are homeomorphic to tori but have non-standard smooth structure. The
current paper should be considered as a sequel to~\cite{FJ}. We formulate our
main result below. (See Section~\ref{sec_backg} for the definitions of the Gromoll groups and expanding endomorphism.)

\begin{theorem}
\label{thm_main}
Let $M$ be an $n$-dimensional ($n\ge7$) infranilmanifold; in particular, $M$
can be a nilmanifold. Let $L\colon M\to M$ be a
codimension $k$ Anosov automorphism. Assume that $M$ admits an
expanding endomorphism $E\colon M\to M$ that commutes with $L$. Let
$\Sigma$ be a homotopy sphere from the Gromoll group
$\Gamma_{k+1}^n$, then the connected sum $M\# \Sigma$ admits a
codimension $k$ Anosov diffeomorphism.
\end{theorem}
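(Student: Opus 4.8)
The plan is to realize $M\#\Sigma$ as $M$ with its smooth structure altered only inside a small ball around a fixed point of $L$, and then to produce the required diffeomorphism as an explicit modification of $L$ supported near that ball; the commuting expanding endomorphism $E$ is used to \emph{renormalize} the modification so that it stays $C^1$‑small and dynamically coherent, while the Gromoll hypothesis is what lets the modification carry the exotic twist. Note that it suffices to produce \emph{some} codimension $k$ Anosov diffeomorphism of $M\#\Sigma$; it need not be a literal perturbation of $L$ itself, so replacing $L$ by a power or by $L^{-1}$ along the way is harmless.

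\emph{Normalization.} Both $L$ and $E$ fix the point $p$ corresponding to the identity coset, and after replacing $E$ by a power we may also assume any finite‑order ambiguity is gone. Choosing exponential coordinates near $p$ we identify a neighborhood of $p$ with a ball in $\R^n=\R^k_s\oplus\R^{n-k}_u$ in which $L$ is essentially the linear hyperbolic map $A=DL_p$, preserving the splitting into a $k$‑dimensional contracting subspace (the codimension $k$ one, after possibly passing to $L^{-1}$) and an $(n-k)$‑dimensional expanding subspace, and $E$ is essentially the linear expansion $F=DE_p$; the relation $EL=LE$ gives $FA=AF$, and one may arrange $L$ and $E$ to be genuinely linear near $p$ at the price of working at small scales, where $E$ does the rescaling. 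The connected sum $M\#\Sigma$ is then obtained from $M$ by deleting a small round ball $B\ni p$ and regluing a standard $D^n$ along a diffeomorphism $\phi$ of $S^{n-1}$ representing $[\Sigma]\in\Theta_n$, so it suffices to build a self‑diffeomorphism of $M\#\Sigma$ that equals $L$ outside a neighborhood of $p$ and carries a hyperbolic splitting of the same type there.

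\emph{Installing the twist.} Since $\Sigma\in\Gamma^n_{k+1}$, the gluing diffeomorphism $\phi$ can be taken to be swept out from a $k$‑parameter family $\gamma$ of diffeomorphisms of an $(n-k-1)$‑disk rel boundary (as recalled in Section~\ref{sec_backg}), the $k$‑dimensional parameter space carrying a basepoint at which $\gamma$ is the identity. The idea is to install a copy of $\gamma$ in $M$ near $p$ with its $k$ parameter directions laid along the $k$‑dimensional contracting subspace $\R^k_s$ and its twisting performed in the complementary directions; this yields a diffeomorphism $\Phi$ equal to the identity outside an arbitrarily small box and changing the smooth structure of that box from standard to the $\Sigma$‑model. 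One sets $\tilde L=\Phi\circ L$, read off on $M\#\Sigma$, so $\tilde L=L$ away from $p$. Two things must then be checked: (i) iterating $\tilde L$ transports $\Phi$ coherently across the orbit structure, so that $\tilde L$ is a genuine self‑diffeomorphism of the exotic manifold $M\#\Sigma$ and not of $M$; and (ii) $\tilde L$ is $C^1$‑close to $L$, so that a standard cone‑field argument gives an invariant splitting $T(M\#\Sigma)=E^s\oplus E^u$ with $\dim E^s=k$, i.e. $\tilde L$ is Anosov of codimension $k$.

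\emph{Role of $E$ and the main obstacle.} Both (i) and (ii) are where the expanding endomorphism is indispensable. Because $L$ expands $n-k$ directions and contracts $k$, it preserves no neighborhood of $p$, so the twist cannot simply be cut in along $B$; it must be distributed over a nested tower of neighborhoods of $p$, and the only such tower compatible with the dynamics is the one generated by the inverse branches of $E$ fixing $p$, together with iterates of $L$. Since $E$ commutes with $L$, this tower and the $L$‑action on it are self‑similar under the $E$‑zooming action: a twist installed at one level is $E$‑conjugate to the twist at the next level, which is exactly what makes the infinite transport in (i) consistent, and it is also what lets one spread the derivative of $\Phi$ across infinitely many scales so that the per‑level modification — hence the global perturbation of $L$ — is $C^1$‑small, as (ii) demands. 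The main obstacle is precisely the simultaneous fulfillment of (i) and (ii): making the exotic surgery dynamically coherent while keeping the perturbation $C^1$‑small. Neither hypothesis can be dropped — a generic exotic sphere will not decompose into a parametrized family that fits along the contracting directions, and without the commuting expanding endomorphism there is no self‑similar tower along which to lay out that family at shrinking $C^1$‑cost. The delicate technical work is to show that the telescoped composition converges to a smooth map, that it genuinely realizes the $\Sigma$‑structure on $M\#\Sigma$, and that it is uniformly $C^1$‑close to $L$.
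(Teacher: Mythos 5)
Your intuition about the two hypotheses is essentially right---the Gromoll condition is what lets the twist $g\in\textup{Diff}_k$ be laid along the $k$ contracting directions (so that the unstable foliation $W^u$ survives the surgery intact), and the commuting expanding endomorphism $E$ is what supplies the self-similar tower of scales---but the mechanism you propose to finish with is not the one in the paper, and I do not believe it can be made to work.

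The central problem is your claim that $\tilde L=\Phi\circ L$ can be made \emph{uniformly $C^1$-close} to $L$ by distributing the exotic twist over the tower of $E$-scales. Conjugating the modification $\Phi$ by $E^{-m}$ shrinks its support in $C^0$, but the $C^1$-norm is scale-invariant: $\|D(E^{-m}\Phi E^m)-\textup{Id}\|$ is exactly $\|D\Phi-\textup{Id}\|$ (up to the bounded conformal factor). Worse, since a representative $g$ of a nontrivial $\Sigma\in\Gamma^n_{k+1}$ is \emph{not} isotopic to the identity, you cannot factor it into arbitrarily small pieces; the exotic class is a discrete obstruction and there is no ``per-level $C^1$-small'' decomposition for a telescoped limit to converge to. So the cone-field argument as you invoke it, which needs $\tilde L$ to be a small $C^1$-perturbation of $L$, has nothing to apply to.

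The paper sidesteps this entirely. The modification is kept at a \emph{fixed} scale and with a \emph{bounded} (not small) effect on the derivative; what the expanding endomorphism buys is not $C^1$-smallness but \emph{long first return times}. Concretely, the authors pass to the covers $M_m=G/\Gamma_m$ with $\Gamma_m=\widetilde E^m\Gamma\widetilde E^{-m}$, whose injectivity radius $r_m\to\infty$, perform the identical local surgery and definition of $f_m$ there, and observe that $M_m\#\Sigma\cong M\#\Sigma$. The quantities $N_1(f_m)$, $N_2(f_m)$ (first return times of the orbit to the modified region, forward for $W^u$ and backward for the stable cone field) tend to infinity, while the worst-case contraction $\alpha(f_m)$ stays bounded; hyperbolicity then follows because expansion accumulated outside the modified region dominates the bounded damage inside. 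This is a statement about recurrence, not about perturbation size.

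You also gloss over coherence point (i), which is where the paper's most delicate trick lives: $L$ does not fix any neighborhood of $p$, so a single twist in a ball $B$ does not give a self-map of $M\#\Sigma$ (it maps $B$ into $L(B)$, which has the standard structure). The paper's solution is to realize $M\#\Sigma$ as $M\#2\Sigma\#(-\Sigma)$: install $\Sigma$ along $R_1^+$, $-\Sigma$ along $R_1^-$, and $\Sigma$ along $R_2^+=L(R_1^+)$. Then $f=L$ except on three regions; on $\widetilde\U_1^+\to\widetilde\U_2^+$ the map is literally induced by $L$ because both ends carry the same twist, while on $\widetilde\U_0$ and $\widetilde\V_0$ one uses that the concatenations $g*g^{-1}$ and $g^{-1}*(LgL^{-1})$ (which appear on $\partial R_1$ and across the tube joining $\U_1^-$ to $\U_2^+$) are explicitly null-isotopic in $\textup{Diff}_k$, producing compensating diffeomorphisms $f_1,f_2$ that make the formulas $f=f_1\circ L$ and $f=L\circ f_2^{-1}$ agree on overlaps. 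Without this cancellation bookkeeping, the ``genuine self-diffeomorphism of the exotic manifold'' you posit in step (i) is not actually constructed.
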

\begin{remark}
 Not all exotic smooth structures on infranilmanifolds come from exotic
spheres. For example, obstruction theory gives smooth
structures on tori that are not even PL-equivalent~\cite{HS} (note that by
Alexander trick all exotic tori that come from exotic spheres are
PL-equivalent). In particular, there are three different smooth
structures on $\mathbb T^5$~\cite[p. 227]{W}. It would be very interesting to see if
these manifolds support Anosov diffeomorphisms.
\end{remark}

Our construction of Anosov diffeomophisms on exotic infranilmanifolds is
different from that of Farrell and Jones and gives Anosov diffeomorphisms of high codimension. 
Of course, one can multiply the Anosov diffeomorphism of Farrell and Jones on $\T^n\#\Sigma$ by an 
Anosov automorphism of an infranilmanifold $M$ to obtain a higher codimension Anosov
diffeomorphism on $(\T^n\#\Sigma)\times M$. Then one can show using smoothing theory (in 
conjuction with~\cite{LR} and~\cite{FH})
that $(\T^n\#\Sigma)\times M$ is not diffeomorphic to any infranilmanifold if $\Sigma$ is an exotic 
sphere.\footnote{The authors would like to thank the referee for bringing this to our attention.} The advantage of our
construction is that it gives higher codimension Anosov diffeomorphisms on manifolds with
{\it irreducible} smooth structure; that is, on manifolds that are not diffeomorphic
to a smooth Cartesian product of two lower dimensional closed smooth manifolds. The following is a straightforward consequence of Theorem~\ref{thm_appndx} that we prove in the appendix.
\begin{prop}\label{prop0}
If $M$ is an $n$-dimensional closed oriented infranilmanifold and $\Sigma$ is a homotopy $n$-sphere such that $M\#\Sigma$ is not diffeomorphic to $M$ then $M\#\Sigma$ is irreducible.
\end{prop}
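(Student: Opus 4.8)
The plan is to argue by contradiction: suppose $M\#\Sigma$ splits as a smooth Cartesian product $N_1\times N_2$ of two lower-dimensional closed smooth manifolds. I would first extract the homotopy-theoretic consequences of such a splitting and then compare them with what is known about $M$ and $M\#\Sigma$. Since $M$ is an infranilmanifold, it is aspherical with $\pi_1(M)$ virtually nilpotent (in particular torsion-free after passing to a finite cover), and $M\#\Sigma$ is homeomorphic to $M$ (connected sum with a homotopy sphere does not change the homeomorphism type), so $M\#\Sigma$ is also aspherical with the same virtually-nilpotent fundamental group. Hence each factor $N_i$ must be aspherical as well, and $\pi_1(N_1)\times\pi_1(N_2)\cong\pi_1(M)$, so by standard facts about nilpotent (or virtually nilpotent) groups each $\pi_1(N_i)$ is again virtually nilpotent of Hirsch length $\dim N_i$. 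This is where I would invoke the rigidity input alluded to in the introduction — the Farrell--Hsiang / Farrell--Jones type results (\cite{FH}, \cite{LR}) together with smoothing theory — to conclude that each $N_i$ is itself homeomorphic, indeed diffeomorphic, to an infranilmanifold; in fact by the appendix result Theorem~\ref{thm_appndx} one controls the smooth structures on the factors.

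The key step is then to show that a smooth product $N_1\times N_2$ of infranilmanifolds has "standard" smooth structure relative to the underlying topological manifold — more precisely, that it is diffeomorphic to an honest infranilmanifold (the product of infranilmanifolds being again an infranilmanifold). Granting this, $M\#\Sigma$ would be diffeomorphic to an infranilmanifold $M'$ homeomorphic to $M$; since infranilmanifolds are topologically rigid and $M'$ is homeomorphic to $M$, one deduces $M'$ is diffeomorphic to $M$ (infranilmanifolds homeomorphic to a given one are diffeomorphic to it — again the relevant rigidity), hence $M\#\Sigma$ is diffeomorphic to $M$, contradicting the hypothesis. The mechanism that makes this go through is precisely Theorem~\ref{thm_appndx} from the appendix, which should say that the smooth structures arising on $M$ by connected-summing with homotopy spheres are detected by an appropriate obstruction that vanishes on genuine products; I would feed the product decomposition into that statement.

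The main obstacle I expect is the middle step: controlling the smooth structure on each factor $N_i$ and on the product. A priori $N_i$ is only a closed aspherical manifold with virtually nilpotent fundamental group of the right Hirsch length, and one needs the full strength of topological rigidity for infranilmanifolds (Farrell--Hsiang) plus the computation of the structure set to say that its smooth structure differs from the standard infranilmanifold one only by a homotopy sphere, and then that the homotopy sphere "splits off" compatibly in the product — this compatibility, i.e.\ that $\Sigma$ as the smoothing obstruction of $N_1\times N_2$ must come from a homotopy sphere on one factor alone, is the delicate point, and it is exactly what Theorem~\ref{thm_appndx} is designed to handle. So the real content is to phrase the product hypothesis so that Theorem~\ref{thm_appndx} applies verbatim; once that reduction is made, Proposition~\ref{prop0} follows formally.
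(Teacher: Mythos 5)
Your proposal is correct and follows the paper's route: Proposition~\ref{prop0} is just the contrapositive of Theorem~\ref{thm_appndx} (the hypothesis that $M\#\Sigma\not\cong M$ forces $\Sigma$ to be exotic, and a splitting $M\#\Sigma=N_1\times N_2$ would then give $M\#\Sigma\cong M$ by that theorem, a contradiction). The asphericity, virtual nilpotence, Farrell--Hsiang and Lee--Raymond rigidity, and smoothing-theory considerations you rehearse are the ingredients of Theorem~\ref{thm_appndx}'s own proof rather than of this reduction, so they need not appear here.
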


If $M$ is an $n$-dimensional ($n\neq 4$) nilmanifold and $\Sigma$ is not diffeomorphic to the
standard sphere $S^n$ then $M\# \Sigma$ is not diffeomorphic to $M$
\cite[Lemma 4]{FJ2}. For an infranilmanifold the situation is more
involved. We have the following.
\begin{prop}
\label{prop1} Let $M$ be an $n$-dimensional ($n\neq 4$) orientable
infranilmanifold with a $q$-sheeted cover $N$ which is a
nilmanifold. Let $\Sigma$ be an exotic homotopy sphere of order $d$
from the Kervaire-Milnor group $\Theta_n$. Then $M\#\Sigma$ is not
diffeomorphic to any infranilmanifold if $d$ does not divide $q$. In particular,
$M\#\Sigma$ is not diffeomorphic to $M$ if $d$ does
not divide $q$.
\end{prop}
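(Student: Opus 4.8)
The plan is to argue by contradiction. Suppose $M\#\Sigma$ is diffeomorphic to some infra-nilmanifold $M'$; I will show that then $d\mid q$, contradicting the hypothesis. For $n\le 3$ the group $\Theta_n$ is trivial and the statement is vacuous, so assume $n\ge 5$ (recall $n\ne 4$). Connected sum with a homotopy sphere does not change the fundamental group, so $\pi_1(M')\cong\pi_1(M\#\Sigma)=\pi_1(M)$; and since two infra-nilmanifolds with isomorphic fundamental groups are diffeomorphic (rigidity of almost-crystallographic groups), $M'$ is diffeomorphic to $M$, and hence so is $M\#\Sigma$. Thus it is enough to show that the existence of a diffeomorphism $f\colon M\#\Sigma\to M$ forces $d\mid q$; the ``in particular'' assertion is the special case $M'=M$. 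Fix an orientation on $M$; since $[\Sigma]$ and $-[\Sigma]$ have the same order in $\Theta_n$, we may freely replace $\Sigma$ by $-\Sigma$ and so assume every diffeomorphism occurring below is orientation preserving.

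Let $N\to M$ be the given $q$-sheeted nilmanifold cover, corresponding to a subgroup $H\le\pi_1(M)$ of index $q$, and under the identification $\pi_1(M\#\Sigma)=\pi_1(M)$ put $H'=f_*^{-1}(H)$. Let $\widehat M\to M$ be the $q$-sheeted cover determined by $H'$; as a finite cover of an infra-nilmanifold it is again an infra-nilmanifold. The crucial geometric ingredient is a covering formula for connected sums: if $Y\to X$ is a degree-$r$ cover of a closed oriented $n$-manifold $X$ ($n\ge 3$) corresponding to a subgroup $K\le\pi_1(X)=\pi_1(X\#\Sigma)$, then the cover of $X\#\Sigma$ determined by $K$ is diffeomorphic to $Y\#(r\Sigma)$, where $r\Sigma$ is the connected sum of $r$ copies of $\Sigma$ with compatible orientations, so that it represents $r[\Sigma]\in\Theta_n$. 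To see this: choose the connected-sum ball in $X$ small enough that it is evenly covered by $r$ disjoint balls in $Y$; since $\Sigma$ minus an open ball is simply connected, its preimage in the cover of $X\#\Sigma$ is $r$ disjoint copies of it; orientability of $X$ and of all its covers lets us choose orientations so that each sheet is glued to a copy of $\Sigma$ carrying the same orientation; and an iterated connected sum performed at $r$ distinct points equals the connected sum with the $r$-fold connected sum of the spheres. Applying this with $X=M$, $Y=\widehat M$, $r=q$, $K=H'$ and restricting $f$ to the corresponding covers gives a diffeomorphism $\widehat M\#(q\Sigma)\cong N$.

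To finish, observe that $\pi_1(\widehat M)=\pi_1(\widehat M\#(q\Sigma))\cong\pi_1(N)$, so by infra-nilmanifold rigidity once more $\widehat M$ is diffeomorphic to $N$. Hence $N\#(q\Sigma)\cong\widehat M\#(q\Sigma)\cong N$, and since $N$ is a nilmanifold and $n\ne 4$, \cite[Lemma 4]{FJ2} forces $q\Sigma$ to be the standard $n$-sphere; that is, $q[\Sigma]=0$ in $\Theta_n$, which is precisely the statement that the order $d$ of $[\Sigma]$ divides $q$. This is the desired contradiction. I expect the only genuinely delicate point to be the covering formula of the second paragraph — in particular the orientation bookkeeping that yields exactly $q[\Sigma]$ (and not some other combination of $\pm[\Sigma]$), which is where orientability of $M$ enters; everything else is a routine combination of standard facts about infra-nilmanifolds with \cite[Lemma 4]{FJ2}.
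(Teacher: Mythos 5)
Your proposal is correct and follows essentially the same route as the paper: reduce via Lee--Raymond rigidity to the case $M\#\Sigma\cong M$, pass to the $q$-sheeted nilmanifold cover to obtain $N\#q\Sigma\cong N$, and invoke \cite[Lemma~4]{FJ2} to conclude $q[\Sigma]=0$, i.e.\ $d\mid q$. The only cosmetic difference is in how you arrange the lift: the paper precomposes the diffeomorphism $M\to M\#\Sigma$ with a self-diffeomorphism of $M$ so that it induces the identity on $\pi_1$ and then lifts directly to $N$, whereas you pull back the covering subgroup $H$ along $f_*$ to an intermediate cover $\widehat M$ and apply rigidity a second time to identify $\widehat M$ with $N$ -- two bookkeeping routes to the same place.
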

\begin{proof}
We proceed via proof by contradiction. Assume that $M$ is
diffeomorphic to $M\#\Sigma$.
It was shown in~\cite{LR} that an isomorphism between the
fundamental groups of a pair of closed infranilmanifolds is always
induced by a diffeomorphism. Hence, by precomposing the assumed
diffeomorphism $M\to M\#\Sigma$ with an appropriate self
diffeomorphism of $M$, we obtain that $M$ and $M\#\Sigma$ are
diffeomorphic via a diffeomorphism inducing the identity isomorphism
of the fundamental group. (The fundamental groups of $M$ and
$M\#\Sigma$ are canonically identified.) By lifting this
diffeomorphism to the covering space $N\to M$, we see that $N$ and
$N\#q\Sigma$ are also diffeomorphic, and hence $q\Sigma$ is
diffeomorphic to $S^n$ because of~\cite[Lemma~4]{FJ2}. Therefore $d$
divides $q$ which is the contradiction proving that $M$ is not
diffeomorphic to $M\#\Sigma$. Also $M\#\Sigma$ is not diffeomorphic
to any other infranilmanifold by the result from~\cite{LR} cited
above.
\end{proof}

In the next section we provide brief background on Anosov
automorphisms and the Gromoll filtration of the group of homotopy spheres. Then we proceed with the
proof of Theorem~\ref{thm_main}. The last section is devoted to examples to
which Theorem~\ref{thm_main} applies. In particular, we establish the following
result.

\begin{prop}
\label{prop2_intro} Let $F$ be a finite group. Then for any
sufficiently large number $k$ there exists a flat Riemannian
manifold $M$ with holonomy group $F$ and a homeomorphic irreducible smooth
manifold $N$ such that
\begin{enumerate}
\item $N$ supports a codimension $k$ Anosov diffeomorphism,
\item $N$ is not diffeomorphic to any infranilmanifold; in particular, $N$ is not
diffeomorphic to $M$.
\end{enumerate}
\end{prop}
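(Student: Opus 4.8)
The plan is to take $N=M\#\Sigma$, where $M$ is a flat manifold with holonomy $F$ engineered to satisfy the hypotheses of Theorem~\ref{thm_main}, and $\Sigma$ is a homotopy sphere lying deep in the Gromoll filtration whose order is prime to $|F|$; then statement (1) is immediate from Theorem~\ref{thm_main}, statement (2) from Proposition~\ref{prop1}, and the irreducibility of $N$ from Proposition~\ref{prop0}.

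First I would build $M$. By the Auslander--Kuranishi theorem, adjoining trivial summands to a faithful integral representation of $F$ yields, for every integer $k$ that is large relative to $F$, a faithful $\Z[F]$-lattice $\Lambda_0$ of rank $k$ together with a class in $H^2(F;\Lambda_0)$ restricting nontrivially to every subgroup of prime order. Fix an even integer $p\ge2$ and a hyperbolic matrix $A\in GL_p(\Z)$ with exactly one contracting eigen-direction and with $A\equiv\mathrm{Id}$ modulo a high power of $|F|$; put $\Lambda=\Lambda_0\otimes_{\Z}\Z^p$ and $C=\mathrm{Id}_{\Lambda_0}\otimes A$. Then $C$ commutes with the $F$-action, is hyperbolic, and --- being congruent to the identity modulo $|F|$ --- acts trivially on the $|F|$-torsion group $H^2(F;\Lambda)$; consequently a class $[c]\in H^2(F;\Lambda)$ restricting nontrivially to every subgroup of prime order (one such exists, built from the class for $\Lambda_0$) is automatically $C$-invariant, and the corresponding Bieberbach group $1\to\Lambda\to\Gamma\to F\to1$ is torsion-free, with $C$ inducing an automorphism of $\Gamma$. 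Set $M=\R^n/\Gamma$, $n=kp$. Then $C$ descends to an Anosov automorphism $L$ of $M$ with stable subbundle of rank $k$, hence of codimension $k$; multiplication by a suitable integer $\ell$ (with $\ell\equiv1$ modulo a high power of $|F|$ and $\ell\ge2$) descends to an expanding endomorphism $E$ of $M$; and $L$ and $E$ commute, their linear parts being $C$ and the scalar $\ell$. Since $p$ is even the holonomy lies in $SL_n(\Z)$, so $M$ is an orientable flat manifold with holonomy $F$. All hypotheses of Theorem~\ref{thm_main} are now in place.

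Next I would choose the homotopy sphere. The even integer $p$, hence the dimension $n=kp$, is still free, so I would use the discussion of the Gromoll filtration in Section~\ref{sec_backg} to pick $n$ so that $\Gamma^n_{k+1}\subset\Theta_n$ contains a homotopy sphere $\Sigma$ whose order $d$ does not divide $|F|$ --- for instance by working in a dimension in which $\Theta_n$ has, deep inside its Gromoll filtration, an element of order divisible by a prime exceeding $|F|$. With $N=M\#\Sigma$, Theorem~\ref{thm_main} supplies a codimension $k$ Anosov diffeomorphism on $N$, which proves (1). Since $M$ is an orientable infranilmanifold, $n\ne4$, the torus $\R^n/\Lambda$ is a nilmanifold cover of $M$ with $|F|$ sheets, and $d\nmid|F|$, Proposition~\ref{prop1} shows that $N$ is diffeomorphic to no infranilmanifold, in particular not to $M$, which proves (2); and then, $N$ being not diffeomorphic to $M$, Proposition~\ref{prop0} shows that $N$ is irreducible. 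Finally $\Sigma$ is homeomorphic to $S^n$ ($n\ge5$), so $N$ is homeomorphic to $M$, completing the argument.

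I expect the real difficulty to lie in the choice of $\Sigma$: one must produce a homotopy sphere that simultaneously has Gromoll index at least $k+1$ and order not dividing $|F|$, living in a dimension $n$ that is at the same time a permissible dimension (an even multiple of $k$) for the flat-manifold construction of the first step. The construction of $M$, though laborious, is a routine assembly from Bieberbach group theory and the theory of Anosov diffeomorphisms of flat manifolds, and the concluding step is purely formal given Theorem~\ref{thm_main} and Propositions~\ref{prop1} and~\ref{prop0}.
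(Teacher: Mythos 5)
Your high-level blueprint is the right one: take $N=M\#\Sigma$ with $M$ a flat manifold satisfying the hypotheses of Theorem~\ref{thm_main} and $\Sigma$ deep in the Gromoll filtration with order not dividing $|F|$, and then invoke Theorem~\ref{thm_main}, Proposition~\ref{prop1}, and Proposition~\ref{prop0}. Your construction of the Anosov automorphism via a $\Z[F]$-lattice $\Lambda_0\otimes\Z^p$ and $C=\mathrm{Id}\otimes A$ is in the same spirit as the paper's (a Porteous-type construction). The paper differs in two substantive ways, and it is at exactly the spot you flag as the ``real difficulty'' that your version has a genuine gap.

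The gap is the dimension constraint. You take $\Lambda_0$ of rank $k$ and $A\in GL_p(\Z)$ with one contracting eigendirection (so $L$ has codimension $k$ on the nose), forcing $n=\dim M=kp$ with $p$ even, hence $n$ even. But the only source cited for deep Gromoll groups with controlled large order --- the Farrell--Ontaneda result --- produces elements in $\Gamma_{k+1}^{4d+3}$, i.e.\ in dimensions $\equiv 3\pmod 4$, which are odd. The set of dimensions your construction permits is disjoint from the dimensions in which $\Gamma_{k+1}^n$ is known to contain an element of order exceeding $|F|$. The paper dodges this by building a \emph{codimension $k-1$} Anosov automorphism $A$ on an infratorus $M_F$ of even dimension $s(k-1)$, and then crossing with a torus $\T^\sigma$ carrying a codimension-$1$ Anosov automorphism $L_1$: setting $M=M_F\times\T^\sigma$ and $L=A\times L_1$ yields codimension $k$, and the free parameter $\sigma\in\{2,3,4,5\}$ is used to force $\dim M=s(k-1)+\sigma\equiv 3\pmod 4$, so the Farrell--Ontaneda result applies. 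You have no analogous degree of freedom, and as written the step ``pick $n$ so that $\Gamma_{k+1}^n$ has an element of order not dividing $|F|$'' cannot be carried out.

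A secondary issue: you assert that $L$ and $E$ commute ``their linear parts being $C$ and the scalar $\ell$.'' Commuting on $M$ is a statement about the induced \emph{affine} maps, not just their linear parts; the translation parts need not be compatible. The paper devotes Proposition~\ref{prop3} (Claims~\ref{claim1}--\ref{claim4}) precisely to arranging, after passing to powers of $L$ and shifting the origin, a common fixed point for a conformal expanding affine map and an affine Anosov map. Your congruence constraints $C\equiv\mathrm{Id}$ and $\ell\equiv 1$ modulo a high power of $|F|$ would simplify that argument, but they do not make it vanish, and the paper's Remark~\ref{rmk432} records that the power of $L$ and the change of origin are genuinely needed.
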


The authors are grateful to the referee for his/her very useful comments and
suggestions.

\section{Background}
\label{sec_backg}
\subsection{Anosov automorphisms and expanding endomorphisms}
\label{section_bck}
Let $G$ be a simply connected nilpotent Lie group equipped with a
right invariant Riemannian metric. Let $\widetilde{L}\colon G\to G$
be an automorphism of $G$ such that $D \widetilde{L}\colon \mathfrak
{g}\to \mathfrak {g}$ is hyperbolic, \ie the absolute values of its
eigenvalues are different from 1. Assume that there exists a
cocompact lattice $\Gamma\subset G$ preserved by $\widetilde{L}$, $\widetilde{L}(\Gamma)=\Gamma$.
Then $\widetilde{L}$ induces an {\it Anosov automorphism} $L$ of the
nilmanifold $M=G/\Gamma$.

If $\widetilde{E}\colon G\to G$ is an automorphism  such that the
eigenvalues of $D \widetilde{E}\colon \mathfrak {g}\to \mathfrak
{g}$ are all greater than 1 and $\widetilde{E}(\Gamma) \subset \Gamma$
then $\widetilde{E}$ induces a finite-to-one {\it expanding endomorphism} of the
nilmanifold $M$.

Existence of Anosov automorphism is a strong condition on $G$. Still
there are plenty of non-toral examples in dimensions six, eight and
higher (see \eg~\cite{DM} and references therein).

Infranilmanifolds are finite quotients of nilmanifolds obtained
through the following construction. Consider a finite group $F$ of
automorphisms of $G$. Then the semidirect product $G\rtimes F$ acts
on $G$ by affine transformations. Consider a torsion free cocompact
lattice $\Gamma$ in $G\rtimes F$. Let $M$ be the orbit space
$G/\Gamma$. This space is naturally a manifold since $\Gamma$ is
torsion-free. It is known that $\Gamma\cap G$ is a lattice in $G$
and has finite index in $\Gamma$. Hence $G/\Gamma\cap G$ is a
nilmanifold that finitely covers $M$.

A hyperbolic automorphism $\tilde L\colon G\to  G$ with $\tilde
L\circ\Gamma\circ\tilde L^{-1}=\Gamma$  induces an infranilmanifold {\it Anosov
automorphism} $L\colon M\to M$.

An expanding endomorphism $\tilde E\colon G\to G$ with $\tilde
E\circ\Gamma\circ \tilde E^{-1}\subset \Gamma$ induces an infranilmanifold {\it expanding
endomorphism} $E\colon M\to M$.
\begin{remark}
 With the definitions above it is clear that the covering
automorphisms $\tilde L$ and $\tilde E$, of $L$ and $E$ from
Theorem~\ref{thm_main}, commute as well. We will use this fact in the proof of Theorem~\ref{thm_main}.
\end{remark}
\begin{remark} \label{rmk22} More generally Anosov automorphisms (and expanding
endomorphisms) can be constructed starting from affine maps
$x\mapsto v\cdot\tilde L(x)$ for some fixed $v\in G$. In case of a
single automorphism of a nilmanifold this does not give anything new as one can change the group
structure of the universal cover by moving the identity element to
the fixed point of the affine map. (In the case of a single automorphism of an infranilmanifold
or a higher rank action, affine maps do give new examples as explained in~\cite{D} and~\cite{H} respectively.)
\end{remark}

\subsection{Gromoll filtration of Kervaire-Milnor group}
Recall that a {\it homotopy $n$-sphere} $\Sigma$ is a smooth
manifold which is homeomorphic to the standard $n$-sphere $S^n$. The
set of all oriented diffeomorphism classes of homotopy $n$-spheres
($n\ge 5$) is a finite abelian group $\Theta_n$ under the operation
\# of connected sum. This group was introduced and studied by
Kervaire and Milnor~\cite{KM}.

A simple way of constructing a homotopy $n$-sphere $\Sigma$ is to
take two copies of a closed disk $D^n$ and paste their boundaries
together by some orientation preserving diffeomorphism
$$
f\colon S^{n-1}\to S^{n-1}.
$$
This produces a homotopy sphere $\Sigma_f$. It is easy to see that
if $f$ is smoothly isotopic to $g$ then $\Sigma_f$ is diffeomorphic
to $\Sigma_g$. Therefore, the map $f \mapsto \Sigma_f$ factors
through to a map
$$
F\colon \pi_0(\textup{Diff}(S^{n-1}))\to \Theta_n,
$$
which is known, due to~\cite{C} and~\cite{Sm}, to be a group isomorphism for $n\ge6$.

View $S^{n-1}$ as the unit sphere in $\mathbb R^n$. Consider the
group $\textup{Diff}_k(S^{n-1})$ of orientation preserving
diffeomorphisms of $S^{n-1}$ that preserve first $k$ coordinates.
The image of this group in $\Theta_n$ is {\it Gromoll subgroup}
$\Gamma^n_{k+1}$. Gromoll subgroups form a filtration
$$
\Theta_n=\Gamma_1^n \supseteq \Gamma_2^n \supseteq \: \ldots \:
\supseteq \Gamma_n^n = 0.
$$

Cerf~\cite{C} has shown that $\Gamma^n_1 = \Gamma^n_2$ for $n\ge 6$.
Antonelli, Burghelea and Kahn~\cite{ABK} have shown that
$\Gamma_{2m-2}^{4m-1} \ne 0$ for $m\ge 4$ and that
$\Gamma_{2v(m)}^{4m+1} \ne 0$ for $m$ not of the form $2^l-1$, where
$v(m)$ denotes the maximal number of linearly independent vector
fields on the sphere $S^{2m+1}$. For some more non-vanishing results
and explicit lower bounds on the order of $\Gamma_k^n$, see
\cite{ABK}.

\subsection{Connected summing with $\Sigma\in \Gamma_{k+1}^n $}
Consider the group $\textup{Diff}(S^{n-1-k},B)$ of
diffeomorphisms of $S^{n-1-k}$ that are identity on an open ball
$B$. Note that any element of $\pi_k(\textup{Diff}(S^{n-1-k},B))$ is
represented by a diffeomorphism of $D^k\times D^{n-1-k}$, $D^{n-1-k}
\stackrel{\mathrm{def}}{=}S^{n-1-k} \backslash B$, which preserves the first $k$ coordinates and
is the identity near the boundary. The space of such diffeomorphisms
will be denoted by {$\textup{Diff}_k(D^k\times
D^{n-1-k},\partial)$}. There are natural inclusions
$$
\pi_k(\textup{Diff}(S^{n-1-k},B))\hookrightarrow
\pi_0(\textup{Diff}_k(S^{n-1}))\hookrightarrow
\pi_0(\textup{Diff}(S^{n-1})).
$$
Denote by $i$ the composition of these inclusions.

It is not very hard to show \cite[Lemma 1.13]{ABK2} that
$$
F\Big(i\big(\pi_k(\textup{Diff}(S^{n-1-k},B))\big)\Big)=\Gamma_{k+1}^n.
$$

Thus, given $\Sigma\in \Gamma_{k+1}^n $ and a manifold $M$ we can
realize $M\# \Sigma$ in the following way. Remove a disk $D^k\times
D^{n-k}$ from $M$. Consider the diffeomorphism $\Bar{g}\in
\textup{Diff}_k(D^k\times D^{n-1-k},\partial)$ which extends by
the identity map to $g\in \textup{Diff}_k(D^k\times \partial
D^{n-k},\partial)$ that represents $\Sigma$ in
$\pi_k(\textup{Diff}(S^{n-1-k},B))$.
Form the connected sum $M\# \Sigma$ by gluing $D^k\times D^{n-k}$ back
in using the identity map on $\partial(D^k)\times D^{n-k}$ and using $g$ on
$D^k\times \partial(D^{n-k})$.

\begin{remark}
It is important that $\Bar{g}$ is identity not only on the boundary
of $D^k\times D^{n-1-k}$ but also in a neighborhood of the boundary.
This is needed for the smooth structure to be well defined avoiding
the problem at the corners.
\end{remark}

\section{The proof of Theorem~\ref{thm_main}}
\subsection{The construction of the smooth structure}
Recall that $M=G/\Gamma$ is an infranilmanifold and $\Sigma\in \Gamma_{k+1}^n $. First we explain our model for $M\#\Sigma$ and
provide the basic construction of the diffeomorphism $f\colon
M\#\Sigma \to M\#\Sigma$. Then we explain how to modify $f$ to obtain
an Anosov diffeomorphism.

Start with an Anosov automorphism $L\colon M\to M$ with
$k$-dimensional stable distribution $E^s$ (in case the unstable
distribution is $k$-dimensional, consider $L^{-1}$ instead). Choose
coordinates in a small neighborhood $\U$ of a fixed point that comes
from the identity element $id$ in $G$ so that $L$ is given by the formula
$$
L(x,y)=(L_1(x),L_2(y)), \; (x,y)\in \U\cap L^{-1}\U
$$
where $x$ is $k$-dimensional, $L_1$ is contracting and $L_2$ is
expanding.

Next we choose a product of two disks $R_0^+=D_0^+ \times C_0$ in
the positive quadrant $\{(x,y)\colon (x,y)>0\}$ in the proximity of
the fixed point $(0,0)$. Here $D_0^+$ is $k$-dimensional and $C_0$
is $(n-k)$-dimensional. Consider
$$
R_0^-\stackrel{\mathrm{def}}{=}\{(-x,y)\colon (x,y)\in
R_0^+\}\stackrel{\mathrm{def}}{=}D_0^-\times C_0
$$
and $R_0\stackrel{\mathrm{def}}{=}D_0\times C_0$, where $D_0$ is the convex
hull of $D_0^+$ and $D_0^-$. Also let $\U_0^+$, $\U_0^-$ and $\U_0$,
$\U_0\supset \U_0^+\cap \U_0^-$, be small neighborhoods of $R_0^+$,
$R_0^-$ and $R_0$, respectively.

Define $R_i$, $R_i^+=D_i^+\times C_i$, $R_i^-=D_i^-\times C_i$,
$\U_i^+$, $\U_i^-$ and $\U_i$ as images of $R_0$, $R_0^+$, $R_0^-$,
$\U_0^+$, $\U_0^-$ and $\U_0$ under $L^i$, $i=1,2$, respectively. We
make our choices in such a way that $\U_0$, $\U_1$, and $\U_2$ are
disjoint.

Let $g\in \textup{Diff}_k(D_1^+\times \partial C_1, \partial)$ be a
diffeomorphism representing $\Sigma$. Glue $\Sigma$ in along the
boundary of $R_1^+$ using $g$ as described in the previous section.
Glue $-\Sigma$ in along the boundary of $R_1^-$ using $g^{-1}$
considered as a diffeomorphism in $\textup{Diff}_k(D_1^-\times
\partial C_1, \partial)$. To be more precise, we identify $R_1^+$ and
$R_1^-$ via a translation $t\colon R_1^+\to R_1^-$ (rather than a
reflection!) and glue $-\Sigma$ in using $t\circ g^{-1}\circ
t^{-1}$. Finally, glue $\Sigma$ in along the boundary of $R_2^+$
using $L\circ g\circ L^{-1}$. The resulting manifold $M\# 2\Sigma\#
-\Sigma$ is diffeomorphic to $M\# \Sigma$. Note that in the course
of the construction of $M\# 2\Sigma\# -\Sigma$ the leaves of the
unstable foliation undergo some cutting and pasting which results in
a smooth foliation that we call $W^u$. Locally the leaves of $W^u$
are given by the same formulae $x=const$. In contrast, the stable
foliation is being torn apart.


\begin{figure}[htbp]
\begin{center}

\begin{picture}(0,0)%
\includegraphics{exotic.pstex}%
\end{picture}%
\setlength{\unitlength}{3947sp}%
\begingroup\makeatletter\ifx\SetFigFont\undefined%
\gdef\SetFigFont#1#2#3#4#5{%
  \reset@font\fontsize{#1}{#2pt}%
  \fontfamily{#3}\fontseries{#4}\fontshape{#5}%
  \selectfont}%
\fi\endgroup%
\begin{picture}(6264,8485)(-731,766)
\put(1650,6300){\makebox(0,0)[lb]{\smash{{\SetFigFont{12}{14.4}{\rmdefault}{\mddefault}{\updefault}{\color[rgb]{0,0,0}$f(\tilde\V_0)$}%
}}}}
\put(5101,839){\makebox(0,0)[lb]{\smash{{\SetFigFont{12}{14.4}{\rmdefault}{\mddefault}{\updefault}{\color[rgb]{0,0,0}$x$}%
}}}}
\put(2176,8864){\makebox(0,0)[lb]{\smash{{\SetFigFont{12}{14.4}{\rmdefault}{\mddefault}{\updefault}{\color[rgb]{0,0,0}$y$}%
}}}}
\put(3826,839){\makebox(0,0)[lb]{\smash{{\SetFigFont{12}{14.4}{\rmdefault}{\mddefault}{\updefault}{\color[rgb]{0,0,0}$D_0^+$}%
}}}}
\put(451,839){\makebox(0,0)[lb]{\smash{{\SetFigFont{12}{14.4}{\rmdefault}{\mddefault}{\updefault}{\color[rgb]{0,0,0}$D_0^-$}%
}}}}
\put(2476,1664){\makebox(0,0)[lb]{\smash{{\SetFigFont{12}{14.4}{\rmdefault}{\mddefault}{\updefault}{\color[rgb]{0,0,0}$C_0$}%
}}}}
\put(226,1664){\makebox(0,0)[lb]{\smash{{\SetFigFont{12}{14.4}{\rmdefault}{\mddefault}{\updefault}{\color[rgb]{0,0,0}$R_0^-$}%
}}}}
\put(3601,1664){\makebox(0,0)[lb]{\smash{{\SetFigFont{12}{14.4}{\rmdefault}{\mddefault}{\updefault}{\color[rgb]{0,0,0}$R_0^+$}%
}}}}
\put(3901,3164){\makebox(0,0)[lb]{\smash{{\SetFigFont{12}{14.4}{\rmdefault}{\mddefault}{\updefault}{\color[rgb]{0,0,0}$\Sigma$}%
}}}}
\put(1351,3164){\makebox(0,0)[lb]{\smash{{\SetFigFont{12}{14.4}{\rmdefault}{\mddefault}{\updefault}{\color[rgb]{0,0,0}$-\Sigma$}%
}}}}
\put(3601,5414){\makebox(0,0)[lb]{\smash{{\SetFigFont{12}{14.4}{\rmdefault}{\mddefault}{\updefault}{\color[rgb]{0,0,0}$\Sigma$}%
}}}}
\put(2776,3689){\makebox(0,0)[lb]{\smash{{\SetFigFont{12}{14.4}{\rmdefault}{\mddefault}{\updefault}{\color[rgb]{0,0,0}$\tilde\U_1$}%
}}}}
\put(3100,3300){\makebox(0,0)[lb]{\smash{{\SetFigFont{12}{14.4}{\rmdefault}{\mddefault}{\updefault}{\color[rgb]{0,0,0}$\tilde\U_1^+$}%
}}}}
\put(550,3300){\makebox(0,0)[lb]{\smash{{\SetFigFont{12}{14.4}{\rmdefault}{\mddefault}{\updefault}{\color[rgb]{0,0,0}$\tilde\U_1^-$}%
}}}}
\put(2776,2039){\makebox(0,0)[lb]{\smash{{\SetFigFont{12}{14.4}{\rmdefault}{\mddefault}{\updefault}{\color[rgb]{0,0,0}$\tilde\U_0$}%
}}}}
\put(1825,4019){\makebox(0,0)[lb]{\smash{{\SetFigFont{12}{14.4}{\rmdefault}{\mddefault}{\updefault}{\color[rgb]{0,0,0}$\tilde\V_0$}%
}}}}
\end{picture}%

\end{center}
\caption{}\label{figure}
\end{figure}


\subsection{The construction of the diffeomorphism $f$}
Consider an open set $\V_0$ which is the union of $\U_1^-$, $\U_2^+$
and a small tube joining them as shown in Figure 1. Let
$\V_1=L(\V_0)$.

We proceed with definition of $f\colon M\# 2\Sigma\# -\Sigma\to M\#
2\Sigma\# -\Sigma$. From now on we will be using the same notation
for various regions in $M\# 2\Sigma\# -\Sigma$ as that for $M$ with
a tilde on top. For example, $\widetilde{\U}_1^+$ stands for
$\U_1^+$ with homotopy sphere $\Sigma$ glued in along the boundary of $R_1^+$.

Let $f(p)=L(p)$ unless $p$ belongs to $\widetilde{\U}_0$,
$\widetilde{\U}_1^+$ or $\widetilde{\V}_0$. We need to define $f$
differently on $\widetilde{\U}_0$, $\widetilde{\U}_1^+$, and
$\widetilde{\V}_0$ as the smooth structure on $L(\U_0)$, $\U_1^+$
and $\V_0$ has been changed.

The restriction $f|_{\widetilde{\U}_1^+}\colon {\widetilde{\U}_1^+}\to
{\widetilde{\U}_2^+}$ can be naturally induced by $L\colon
{{\U}_1^+}\to {{\U}_2^+}$ because of the way ${\widetilde{\U}_1^+}$
and ${\widetilde{\U}_2^+}$ were defined.

To define $f|_{\widetilde{\U}_0}\colon {\widetilde{\U}_0}\to
{\widetilde{\U}_1}$, we interpret $\widetilde{\U}_1$ as $\U_1$ with
$R_1$ being removed and then being glued back in via a
diffeomorphism $h$ that equals to $g$ along the boundary of $R_1^+$,
$g^{-1}$ along the boundary of $R_1^-$ and identity elsewhere.

The diffeomorphism $h$ is the concatenation of $g$ and $g^{-1}$, and
is easily seen to represent identity in
$\pi_k(\textup{Diff}(\partial C_1))$. Indeed one can explicitly
construct an isotopy from $h$ to $Id$ in $\textup{Diff}_k(R_1)$ by
joining the translation $t\colon R_1^+\to R_1^-$ (from the
definition of $g^{-1}\in \textup{Diff}_k(D_1^-, \partial C_1)$) to
$Id\colon R_1^+\to R_1^+$. It follows that $\U_1$ is diffeomorphic
to $\widetilde{\U}_1$ by a diffeomorphism $f_1$ that fixes the first $k$
coordinates and is equal to the identity map near $\partial \U_1=\partial
\widetilde{\U}_1$. Define
$$
f|_{\widetilde{\U}_0}\stackrel{\mathrm{def}}{=}f_1\circ L.
$$

Similar to above, $\V_0$ is diffeomorphic to $\widetilde{\V}_0$ by a
diffeomorphism $f_2$ that fixes the first $k$ coordinates and is
identity near $\partial \V_0=\partial \widetilde{\V}_0$. Define
$$
f|_{\widetilde{\V}_0}\stackrel{\mathrm{def}}{=}L\circ f_2^{-1}.
$$

It is clear that our definitions coincide on the boundary of the
regions so that diffeomorphism $f$ is well defined. Also notice that
$f$ preserves foliation $W^u$.

\subsection{Modifying $f$ to get an Anosov diffeomorphism on
$M\#\Sigma$} We fix a Riemannian metric on $M\# 2\Sigma\# -\Sigma$ in
the following way. On $(M\# 2\Sigma\# -\Sigma) \backslash
(\widetilde{\U}_1^+\cup \widetilde{\U}_1^-\cup \widetilde{\U}_2^+)$ we
use a Riemannian metric induced by a right invariant Riemannian metric on $G$ such
that $F\subset Iso(G)$, where $F$ is from~\ref{section_bck}. We
extend it to $\widetilde{\U}_1^+\cup \widetilde{\U}_1^-\cup
\widetilde{\U}_2^+$ in an arbitrary way.

Let $\W=\widetilde{\U}_0\cup \widetilde{\U}_1^+\cup
\widetilde{\V}_0$. The foliation $W^u$ is uniformly expanding everywhere
but in $\W$. (In fact, with a suitable choice of the Riemannian metric
$W^u$ is expanding on $\widetilde{\U}_1^+$ as well, but we won't use
this fact.) The number
$$
\alpha(f) \stackrel{\mathrm{def}}{=} \inf_{v\in T_xW^u, v\neq 0,\:
x\in\W}\Big(\frac{\|Df^3v\|}{\|v\|}\Big)
$$
measures maximal possible contraction along $W^u$ that occurs as a
point passes through $\W$. Therefore, if one can guarantee that the
first return time to $\W$
$$
N_1(f)\stackrel{\mathrm{def}}{=}\inf_{x\in\W}\{i\colon i\ge 3, f^i(x)\in \W\}
$$
is large compared to $\alpha(f)$, then $W^u$ will be expanding for $f$.


Next we will construct a cone field on $M\#2\Sigma\#-\Sigma$ that would give us the stable bundle.

Define $E^u_f\stackrel{\mathrm{def}}{=}TW^u\subset T(M\#2\Sigma\#-\Sigma)$. Recall that $E^s\subset TM$ is the stable bundle for $L$. Let
$\P\stackrel{\mathrm{def}}{=}\widetilde\V_0\cup\widetilde\U_1^+$. For any $x\in {(M\#2\Sigma\#-\Sigma)}
\backslash \P\;$ define $E^s(x)$ by identifying
${(M\#2\Sigma\#-\Sigma)}
\backslash \P$ and $M
\backslash ({\U}_1^+\cup{\V}_0)$. Fix a small $\varepsilon>0$ and define the cones
$$
\EuScript C(x)\stackrel{\mathrm{def}}{=}\{v\in T_xM:\measuredangle(v,E^s)<\varepsilon\}
$$
for $x\in (M\#2\Sigma\#-\Sigma)
\backslash \P$. Also define
\begin{multline*}
\quad\quad\quad\quad\quad\quad \quad\quad\quad\C(x)\stackrel{\mathrm{def}}{=}Df^{-1}(\C(f(x))),\; x\in\widetilde\V_0, \\
 \quad\quad\quad\quad\quad\quad\quad\quad\quad\shoveleft{ \C(x)\stackrel{\mathrm{def}}{=}Df^{-2}(\C(f^2(x))),\; x\in\widetilde\U_1^+. \hfill}
\end{multline*}
Note that $Df^{-1}(\C(x))\subset\C(f^{-1}(x))$ for $x\notin\P$ and $\C(x)\cap E_f^u(x)=\{0\}$ for all $x\in M\#2\Sigma\#-\Sigma$. Therefore for any
$x\in\P$ the sequence of cones $Df^{-i}(\C(x))$, $i>0$, shrinks exponentially fast towards $E^s$ until
the sequnce of base points $f^{-i}(x)$ enters $\P$ again. We see that if the first return time 
$$
N_2(f)\stackrel{\mathrm{def}}{=}\inf_{x\in\P}\{i\colon i\ge 2, f^{-i}(x)\in \P\}
$$
is large enough,
then there exists $N$ that depends on $\varepsilon$ and various choices we have made in our construction such that

\begin{enumerate}
 \item $\forall x\in \P$,\: $f^{-i}(x)\notin\P, i=2,\ldots N$,
\item $\forall x \in\P$, \:$Df^{-N}(\C(x))\subset \C(f^{-N}(x))$,
\item $\exists \lambda>1:\;\; \forall x\in \P$,\: $\forall v\in\C(x)$,\: $\|Df^{-N}v\|\ge\lambda\|v\|$.
\end{enumerate}
Now we modify the cone field in the following way.
\begin{multline*}
\quad\quad\quad \quad\quad\quad\bar\C(x)\stackrel{\mathrm{def}}{=}Df^{-i}(\C(f^i(x))),\;\;\;\mbox{if}\;\; x\in f^i(\P)\;\;\mbox{for}\;\; i=0,\ldots N-1, \\
 \quad\quad\quad\quad\quad\quad\shoveleft{\bar \C(x)\stackrel{\mathrm{def}}{=}\C(x),\;\;\;\mbox{if}\;\; x\notin\bigcup_{i=0}^{N-1}f^i(\P).\hfill}
\end{multline*}
It is clear from our definitions that the new cone field is invariant
$$
\forall x, \;\; Df^{-1}(\bar\C(f(x)))\subset\bar\C(x)
$$
and
$$
\exists\mu>1:\;\; \forall x \;\;\mbox{and}\;\;\forall v\in\bar\C(x),\;\;\|Df^{-1}v\|\ge\mu\|v\|.
$$
These properties imply~\cite[Section 6.2]{KH} that the bundle
$$
E^s_f(\cdot)\stackrel{\mathrm{def}}{=}\bigcap_{i\ge 0}Df^{-i}(\bar\C(f^i(\cdot)))
$$
is a $Df$-invariant $k$-dimensional exponentially contracting stable bundle for $f$.

We summarize that $f$ is Anosov provided that the first return times
$N_1(f)$ and $N_2(f)$ are large enough: how large depends only on the choices we
have made when constructing $M\# \Sigma$ and $f|_{\widetilde{\U}}$.

Recall that $M$
admits an expanding endomorphism $E$ that commutes with $L$. Let
$\widetilde{E}\colon G\to G$ be the covering automorphism of $E$.
The covering automorphism $\widetilde{L}$ of $L$ preserves the
lattice of affine transformations $\Gamma_m\stackrel{\mathrm{def}}{=}\widetilde{E}^m\circ\Gamma\circ\widetilde{E}^{-m}$, $m\ge 1$, \ie
$\widetilde{L}\circ\Gamma_m\circ\widetilde{L}^{-1}=\Gamma_m$. And
hence induces an Anosov automorphism $L_m$ of
$M_m\stackrel{\mathrm{def}}{=}G/\Gamma_m$.
Also we have the covering map $p\colon M_m\to M$ induced by $id\colon G\to G$ and the expanding
diffeomorphism $H\colon M\to M_m$ induced by $\widetilde{E}^m$.

We repeat our constructions of the exotic smooth structure and the
diffeomorphism on $M_m$ using the copy $\U_m$ of $\U$ in
$p^{-1}(\U)$ that contains
the $\Gamma_m$ orbit of $id$. Since $\U_m$ is
isometric to $\U$ and $L_m|_{\U_m}=L|_\U$ we can repeat the
constructions with the same choices as before and obtain a manifold
$M_m\# 2\Sigma\# -\Sigma$ together with a diffeomorphism $f_m$. In
particular, due to the same choices, $\alpha(f)=\alpha(f_m)$.

Let number $r_m$ be the maximal radius of a ball $B(id, r_m)\subset
G$ that projects injectively into $M_m$. Since $\widetilde{E}$ is
expanding $r_m\to \infty$ as $m\to \infty$. It follows that
$\min(N_1(f_m),N_2(f_m))\to \infty$ as $m\to \infty$. Hence, $f_m$ is Anosov for
large enough $m$. Since $M_m\# \Sigma$ is diffeomorphic to $M\#
\Sigma$ we obtain an Anosov diffeomorphism on $M\# \Sigma$ by
conjugating $f_m$ with this diffeomorphism.


\section{Examples}

Here we collect some examples where the conditions of Theorem~\ref{thm_main} are
satisfied. 
\subsection{Toral examples.} \label{exp_toral} Any Anosov
automorphism of the torus $\mathbb T^n$ commutes with the expanding endomorphism
$s\cdot Id$, $s>1$. Thus we obtain codimension $k$ Anosov
diffeomorphisms on $\T^n\#\Sigma$, where $\Sigma\in\Gamma^n_{k+1}$. If $\Sigma$ is 
not diffeomorphic to $S^n$ then, by the discussion in the
paragraph between~Propositions~\ref{prop0} and~\ref{prop1}, $\T^n\#\Sigma$ is exotic. And hence, 
by Proposition~\ref{prop0}, $\T^n\#\Sigma$ is irreducible. Therefore we obtain
codimension $k$ Anosov diffeomorphisms on exotic irreducible tori whenever
$\Gamma^n_{k+1}$ is non-trivial.
\subsection{A nilmanifold example.} Let $L_1\colon M_1\to M_1$ be
the Borel-Smale example of Anosov automorphism of a six dimensional
nilmanifold $M_1$ which is a quotient of the product of two copies
of Heisenberg group (see~\cite{KH}, section~4.17, for a detailed
construction). It is easy to check that
$$
\tilde E_1\colon \begin{pmatrix} 1 & x_1 & z_1 \\
0 & 1 & y_1\\
0 & 0 & 1
\end{pmatrix}\times
\begin{pmatrix} 1 & x_2 & z_2 \\
0 & 1 & y_2\\
0 & 0 & 1
\end{pmatrix}\mapsto
\begin{pmatrix} 1 & 2x_1 & 4z_1 \\
0 & 1 & 2y_1\\
0 & 0 & 1
\end{pmatrix}\times
\begin{pmatrix} 1 & 2x_2 & 4z_2 \\
0 & 1 & 2y_2\\
0 & 0 & 1
\end{pmatrix}
$$
induces an expanding endomorphism $E_1\colon M_1\to M_1$ that commutes with
$L_1$.

Also let $L_2\colon \mathbb T^{15}\to\mathbb T^{15}$ be a
codimension two Anosov automorphism and let $E_2\colon \mathbb
T^{15}\to\mathbb T^{15}$ be a conformal the expanding endomorphism as
in~\ref{exp_toral}.

We get that $L=L_1\times L_2$ is a codimension five Anosov
automorphism that commutes with the expanding endomorphism $E=E_1\times
E_2$. It is known that $|\Gamma_6^{21}|\ge 508$. Hence, Theorem~\ref{thm_main}
applies non-trivially to $L$ giving codimension five Anosov
diffeomorphisms on 21-dimensional exotic nilmanifolds
$(M_1\times\mathbb T^{15})\#\Sigma$, $\Sigma\in\Gamma_6^{21}$. 
The discussion at the end of Section~\ref{exp_toral} gives a reason why
these manifolds are exotic and irreducible.
\subsection{Infranilmanifold examples.}
Here the main goal is to show that Theorem~\ref{thm_main} can be used to
produce Anosov diffeomorphisms of high codimension on exotic
infratori. We use the term {\it ``infratorus"} as a synonym to ``closed
flat Riemannian manifold".

By a theorem of Auslander and Kuranishi~\cite{AK} there exists an
infratorus with holonomy group $F$ for any finite group $F$. There
are only finitely many groups of a given order $q$. Hence, there
exists a positive integer $k(q)$ such that for any finite group $F$
of order $q$ there is an infratorus with holonomy group $F$ of
dimension $k(q)$.
\begin{prop}
\label{prop2} Given a finite group $F$ of order $q$ and an integer
$k>k(q)$, there exists an (orientable) infratorus $M$ with holonomy
group $F$ and a homeomorphic irreducible smooth manifold $N$ such that
\begin{enumerate}
\item $N$ supports a codimension $k$ Anosov diffeomorphism,
\item $N$ is not diffeomorphic to any infranilmanifold; in particular, $N$ is not
diffeomorphic to $M$.
\end{enumerate}
\end{prop}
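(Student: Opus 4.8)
The plan is to assemble the desired manifold $M$ as a product of a flat manifold carrying the holonomy group $F$ with a high-dimensional torus, arrange that the hypotheses of Theorem~\ref{thm_main} are met with a large codimension, and then invoke Proposition~\ref{prop1} and Proposition~\ref{prop0} to conclude irreducibility and the non-existence of an infranilmanifold structure. First I would fix, by the Auslander--Kuranishi theorem together with the remark preceding the statement, an orientable infratorus $M_0$ of dimension $k(q)$ with holonomy group $F$; if $M_0$ fails to be orientable one passes to its orientation double cover, which is again a flat manifold whose holonomy group still surjects onto (a subgroup related to) $F$, so with a little care in the choice of $F$-representation one keeps the holonomy group equal to $F$. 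Let $q'$ be the order of the (finite) group $\Gamma/(\Gamma\cap G)$ for this $M_0$, i.e. the number of sheets in the nilmanifold (here torus) cover $N\to M_0$; note $q'$ divides $q$, in fact $q'=q$ for the Auslander--Kuranishi construction.

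Next I would take a codimension-two Anosov automorphism $L_2$ of a torus $\T^{N}$ of suitably large dimension $N$, together with the conformal expanding endomorphism $s\cdot\Id$ from Section~\ref{exp_toral}, which commutes with $L_2$. An infratorus $M_0$ always admits an Anosov automorphism $L_0$ — for instance one induced by a hyperbolic integer matrix commuting with the holonomy representation, or, failing an explicit one, one simply uses the product structure below and does not need $M_0$ itself to be Anosov — and it admits the expanding endomorphism induced by $x\mapsto s\cdot x$ on the covering torus (for appropriate $s$ this descends through the holonomy action), which commutes with $L_0$. Then $M\stackrel{\mathrm{def}}{=}M_0\times\T^{N}$ is an orientable infratorus with holonomy group $F$, it carries the Anosov automorphism $L\stackrel{\mathrm{def}}{=}L_0\times L_2$, whose stable (or unstable) distribution has dimension $k:=\dim E^s(L_0)+2$, and it carries the expanding endomorphism $E\stackrel{\mathrm{def}}{=}E_0\times (s\cdot\Id)$ commuting with $L$; by choosing $N$ large we can make the codimension $k$ equal to any prescribed integer $>k(q)$ (and in particular large). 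Now choose an exotic homotopy $n$-sphere $\Sigma\in\Gamma^n_{k+1}$, where $n=\dim M$, whose order $d$ in $\Theta_n$ does not divide $q$; the non-vanishing and lower-bound results of Antonelli--Burghelea--Kahn quoted in Section~2 guarantee that for $N$ (hence $n$) in an appropriate range $\Gamma^n_{k+1}$ contains elements of arbitrarily large order, so such a $\Sigma$ exists once $n$ is large. Set $N\stackrel{\mathrm{def}}{=}M\#\Sigma$ (abusing notation for the dimension parameter — I would of course rename to avoid the clash with the manifold $N$).

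By Theorem~\ref{thm_main}, $M\#\Sigma$ supports a codimension $k$ Anosov diffeomorphism, giving item (1). Since $d$ does not divide $q\ge q'$, Proposition~\ref{prop1} shows that $M\#\Sigma$ is not diffeomorphic to any infranilmanifold, giving item (2); in particular it is not diffeomorphic to $M$. Finally, because $M$ is a closed oriented infranilmanifold and $M\#\Sigma$ is not diffeomorphic to $M$, Proposition~\ref{prop0} shows that $M\#\Sigma$ is irreducible. This completes the construction. The main obstacle I anticipate is the bookkeeping that simultaneously forces: (a) the holonomy group to stay exactly $F$ after passing to an orientable model, (b) the existence of the commuting pair $(L,E)$ on the infratorus factor — the cleanest route is to put \emph{all} the hyperbolicity into the torus factor $\T^N$ and only use the conformal expanding map on $M_0$, so that one never needs an Anosov automorphism of $M_0$ itself, and (c) matching the Gromoll-filtration level $k+1$ with a non-vanishing group $\Gamma^n_{k+1}$ containing an element of order not dividing $q$, which is where one must quote explicitly the orders from~\cite{ABK}. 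Everything else is a direct citation of the three propositions and theorem already established.
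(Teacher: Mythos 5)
Your top-level plan matches the paper's: produce an orientable infratorus with holonomy $F$ carrying a codimension $k$ Anosov automorphism commuting with an expanding endomorphism, connected-sum with a Gromoll sphere of suitably large order, and cite Theorem~\ref{thm_main} together with Propositions~\ref{prop0} and~\ref{prop1}. But the two nontrivial ingredients are exactly the ones you wave past, and your proposed shortcut for the first is actually wrong. Your ``cleanest route'' of taking $L_0=\mathrm{Id}$ on $M_0$ and putting all the hyperbolicity into the torus factor $\T^N$ fails outright: $L=\mathrm{Id}_{M_0}\times L_2$ has unit eigenvalues along $TM_0$, so it is not Anosov. And the alternative you float --- that a generic infratorus $M_0$ with holonomy $F$ ``always admits'' an Anosov automorphism $L_0$ --- is unjustified and, for an arbitrary $M_0$ of dimension $k(q)$, simply not available. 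This is precisely the content of the paper's construction subsubsection: starting from a dimension-$m$ infratorus with holonomy $F$ ($m=k-1$), form the tensor product $A_m=\mathrm{Id}\otimes A_1$ with a codimension-one Anosov matrix $A_1\in GL_s(\Z)$, note that it commutes with the diagonal $F$-action on $\Z^m\otimes\Z^s$, pass to a power to make $(A_m)_*=\mathrm{Id}$ on $H^2(F,\Z^m\otimes\Z^s)$, and then lift $A_m$ to an automorphism of the pulled-back extension $\Gamma^{(s)}$. The result is a different, $ms$-dimensional infratorus $M_F$ with the same holonomy $F$, equipped with an affine Anosov diffeomorphism; orientability is arranged by taking $s$ even. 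None of this comes for free from Auslander--Kuranishi.

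Two further points. The hypothesis of Theorem~\ref{thm_main} that the Anosov automorphism commutes with an expanding endomorphism is itself nontrivial for infratori: the paper devotes Proposition~\ref{prop3} to proving that a positive power of any infratorus Anosov automorphism commutes with an expanding affine map with a common fixed point, via Bieberbach normalization, the Epstein--Shub endomorphism $\varphi$, and explicit group-cohomology arguments (Claims~\ref{claim1}--\ref{claim4}). Asserting that $x\mapsto s\cdot x$ ``descends and commutes'' skips all of that. Finally, the large-order element of the Gromoll group is not obtained from the Antonelli--Burghelea--Kahn non-vanishing results alone; the paper invokes the Farrell--Ontaneda theorem~\cite{FO}, which gives an element of odd order $>u$ in $\Gamma^{4d+3}_{k+1}$ for $d>d(k,u)$, and the dimension must therefore be made $\equiv 3\pmod 4$ by adjoining a torus factor $\T^\sigma$ with $\sigma\in\{2,3,4,5\}$ carrying a codimension-one Anosov matrix. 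Your appeal to~\cite{ABK} does not obviously yield elements of arbitrarily large order at a prescribed filtration level.
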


Clearly, Proposition~\ref{prop2} implies
Proposition~\ref{prop2_intro}.

\subsubsection{Existence of a commuting expanding endomorphism}
We start our proof of Proposition~\ref{prop2} by showing that the
assumption from Theorem~\ref{thm_main} about the existence of a commuting expanding
endomorphism is satisfied for some positive power of a given Anosov
automorphism.
\begin{prop}
\label{prop3} Let $M$ be an infratorus of dimension $n$ and $L\colon
M\to M$ be an Anosov automorphism. Then there exists an expanding
affine transformation (cf. Remarks~\ref{rmk22} and~\ref{rmk432}) that commutes with some positive power of $L$ and has a fixed
point in common with this power of $L$.
\end{prop}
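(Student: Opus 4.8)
The plan is to exhibit the desired map as a suitable dilation centered at the fixed point of a lift of $L$, and to locate the whole difficulty in the fact that hyperbolicity of $L$ forces the translational parts of $\pi_1(M)$ to be rational. Concretely, I would write $M=\mathbb R^n/\Gamma$ with $\Gamma\subset\mathbb R^n\rtimes O(n)$ a Bieberbach group, $\Lambda=\Gamma\cap\mathbb R^n$ its translation subgroup (a lattice of full rank, characteristic in $\Gamma$), and $F=\Gamma/\Lambda$ the finite holonomy group; choosing linear coordinates with $\Lambda=\mathbb Z^n$ realizes $F$ as a finite subgroup of $GL(n,\mathbb Z)$, and for each $\phi\in F$ I fix a representative $(v_\phi,\phi)\in\Gamma$, so $v_\phi\in\mathbb R^n$ is determined modulo $\mathbb Z^n$. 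Let $\widetilde L\colon x\mapsto\widetilde L_0 x+b$ be the lift of $L$ (affine, in the sense of Remark~\ref{rmk22}); it is hyperbolic with $\widetilde L\Gamma\widetilde L^{-1}=\Gamma$, hence $\widetilde L_0\Lambda=\Lambda$ since $\Lambda$ is characteristic, so $\widetilde L_0\in GL(n,\mathbb Z)$, and $\widetilde L$ has a unique fixed point $p=(I-\widetilde L_0)^{-1}b$. Passing to the coordinate $y=x-p$ turns $\widetilde L$ into its linear part $\widetilde L_0$, leaves $\mathbb Z^n$ unchanged, and replaces $\Gamma$ by a conjugate Bieberbach group $\Gamma_p$ with representatives of translational part $w_\phi:=v_\phi-(I-\phi)p$; importantly $\widetilde L_0$ still normalizes $\Gamma_p$.

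The key step will be the claim that every $w_\phi$ lies in $\mathbb Q^n$. Conjugating $(w_\phi,\phi)$ by $\widetilde L_0$ stays in $\Gamma_p$, and comparing translational parts gives $\widetilde L_0 w_\phi\equiv w_{\alpha(\phi)}\pmod{\mathbb Z^n}$, where $\alpha$ is the automorphism of $F$ induced by $\widetilde L_0$. Hence the hyperbolic toral automorphism of $\mathbb T^n=\mathbb R^n/\mathbb Z^n$ induced by $\widetilde L_0$ permutes the finite set $\{\bar w_\phi:\phi\in F\}$, so each $\bar w_\phi$ is a periodic point: $\widetilde L_0^{N}w_\phi\equiv w_\phi\pmod{\mathbb Z^n}$ for a common $N\ge1$. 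Since $\widetilde L_0$ is hyperbolic, $\widetilde L_0^{N}-I$ is an invertible integer matrix, whence $w_\phi\in(\widetilde L_0^{N}-I)^{-1}\mathbb Z^n\subset\mathbb Q^n$. I expect this to be the only genuinely nontrivial point: for a general flat manifold the translational parts need not be rational and then no dilation descends, so hyperbolicity of $L$ is used in an essential way, and this is where a power of $L$ enters the picture naturally.

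Given the claim, I would conclude as follows. Choose $d\in\mathbb Z_{>0}$ with $d\bar w_\phi=0$ in $\mathbb T^n$ for all $\phi$, and let $\widetilde E$ be the dilation $x\mapsto(1+d)(x-p)+p$. In the coordinate $y=x-p$ this is the scalar $(1+d)I$, so $\widetilde E$ commutes with $\widetilde L$ (hence the induced $E$ commutes with $L$, and a fortiori with every power $L^{j}$), and it fixes $p$, which is also fixed by $\widetilde L$; moreover $\widetilde E(w_\phi,\phi)\widetilde E^{-1}=((1+d)w_\phi,\phi)=(dw_\phi,I)(w_\phi,\phi)\in\Gamma_p$, while $\widetilde E$ conjugates $\mathbb Z^n$ into $(1+d)\mathbb Z^n\subset\mathbb Z^n$, so $\widetilde E\Gamma_p\widetilde E^{-1}\subset\Gamma_p$ of finite index. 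Therefore $\widetilde E$ descends to a finite-to-one expanding endomorphism $E$ of $M$ (it scales the flat metric by $1+d>1$) which commutes with $L$ and shares with it the fixed point of $M$ determined by $p$, so the proposition follows, with $j=1$ sufficing in this construction. The remaining checks — that the change to $y$-coordinates is harmless, and that $\widetilde E$ normalizes $\Gamma_p$ up to finite index and shares the fixed point with each $L^{j}$ — are routine, and the only real obstacle is the rationality claim of the second paragraph.
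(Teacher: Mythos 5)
Your proof is correct, and it takes a genuinely different and noticeably shorter route than the paper's.

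The paper's proof has two independent inputs: (i) a cohomological argument (Claim 4.4 there) showing that after a suitable change of origin $\Gamma\subset\qqZ\rtimes GL_n(\Z)$, and (ii) the Epstein--Shub construction of a monomorphism $\varphi\colon\Gamma\to\Gamma$ lying over $s\cdot\mathrm{Id}$ with $s\equiv1\pmod q$; it then realizes $\varphi$ by an affine conformal map $E$, plays an analogous game with $L$ to build an affine Anosov map $\A$, passes to $q$-th powers to get a common fixed point, and finally transports everything back to $L^q$ by an affine conjugacy $\TT$. Your argument short-circuits all of this by choosing as origin the unique fixed point $p$ of the lift $\widetilde L$. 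The crux is then the rationality of the translational parts $w_\phi$ in these coordinates, and your derivation of it is exactly the right one: the toral automorphism induced by $\widetilde L_0$ permutes the finite set $\{\bar w_\phi\}$, so each $\bar w_\phi$ is periodic, and periodic points of a hyperbolic automorphism of $\T^n$ are rational since $\widetilde L_0^N-I$ is an invertible integer matrix. From there the scalar dilation $y\mapsto(1+d)y$ (with $d$ killing all $\bar w_\phi$) visibly commutes with $\widetilde L_0$, fixes $0$, and satisfies $\widetilde E\,\Gamma_p\,\widetilde E^{-1}\subseteq\Gamma_p$, so it descends to a finite-to-one expanding endomorphism commuting with $L$. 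A few remarks on the comparison. First, your result is in fact a little stronger than what is stated or what the paper obtains: you get commutation with $L$ itself ($j=1$), whereas the paper only gets a positive power of $L$ (and indeed more than $q$, per Remark~\ref{rmk432}). Second, the ``key difficulty'' is slightly different from what you suggest: for \emph{any} Bieberbach group the translational parts can be made to lie in $\qZ$ after a suitable translation of origin (this is essentially the paper's Claim 4.4, a fact that needs no hyperbolicity). What you actually prove is that the \emph{specific} origin $p$ fixed by $\widetilde L$ already does the job, and this is precisely where hyperbolicity is used. Third, the paper's use of Epstein--Shub makes its expanding endomorphism available on any flat manifold regardless of Anosov data; your construction is tied to $L$, but that is all the proposition asks for and it buys a much cleaner argument and a cleaner normalization (both $L$ and $E$ are automorphisms/endomorphisms in the strict sense, at the new origin, with no affine remainder).
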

\begin{proof}[The proof]
Let $\Gamma$ be the fundamental group of $M$. Then $\Gamma$ can be
identified with the group of deck transformations of $\R^n$. By
classical Bieberbach theorems we know that the intersection of
$\Gamma$ and the group of translations of $\R^n$ is a normal free
abelian group of finite index in $\Gamma$. We pick our basis for
$\R^n$ so that this group is $\Z^n\subset \R^n$. The finite quotient
group $F=\Gamma/\Z^n$ is the holonomy group of $M$. The holonomy
representation of $F$ into $GL_n(\R)$ is faithful and contained in
$GL_n(\Z)$. Thus we can identify $F$ with a subgroup of $GL_n(\Z)$.

We have the corresponding exact sequence
$$
0\longrightarrow \Z^n\longrightarrow\Gamma\longrightarrow
F\longrightarrow 1.
$$
Every element $\gamma\in\Gamma$ has the form $x\mapsto g_\gamma
x+u_\gamma$, where $g_\gamma\in F$, $u_\gamma\in\R^n$.

\begin{claim}\label{claim1}
Let $q$ be the order of $F$. There exists $u_0\in\R^n$ such that if
we denote by $t$ the translation $x\mapsto x+u_0$ then
$$
t\circ\Gamma\circ t^{-1}\subseteq \qqZ\rtimes GL_n(\Z).
$$
\end{claim}
\begin{proof}[The proof of Claim~\ref{claim1}]
Since $(u_\gamma\!\mod \Z^n)\in\T^n$ depends only on $g_\gamma$ the
function ${\gamma\mapsto (u_\gamma \mod \Z^n)}$ factors through to a
function
$$
\bar u\colon F\to \T^n.
$$
This function can be easily seen to be a crossed homomorphism, that
is, $\bar u(gh)=g\bar u(h)+\bar u(g)$ for all $g, h\in F$.

Let $\pi\colon\T^n\to\frac1q\T^n\stackrel{\mathrm{def}}{=}\R^n/\qZ$ be the
natural projection. The class of $\bar u$ in $H^1(F,\T^n)$ vanishes
after projecting to $H^1(F,\frac1q\T^n)$. This can be seen as
follows. Define
$$
u_0\stackrel{\mathrm{def}}{=}-\frac1q\sum_{g\in F}u_{\hat g},
$$
where $\hat g$ is any lift of $g$ to $\Gamma$. Then a direct
computation shows that
$$
\left(gu_0-u_0 \mod\qZ\right) = \pi(\bar u(g))
$$
for any $g\in F$. This means that $\pi\circ\bar u$ is a principal
crossed homomorphism. Now take any $\gamma\in\Gamma$,
$$
t\circ\Gamma\circ t^{-1}(x)=g_\gamma x-g_\gamma u_0+u_\gamma+u_0.
$$
But
$$\left(u_\gamma-g_\gamma u_0+u_0\mod\qZ\right)=\pi(\bar
u({g_\gamma}))-\left(g_\gamma u_0-u_0\mod\qZ\right)=0 $$ and the
claim follows.
\end{proof}

Therefore, by changing the origin of $\R^n$, we may assume from now
on that $\Gamma$ is a subgroup of $\qqZ\rtimes GL_n(\Z)$.

Fix an integer $s\equiv 1\mod q$, $s>1$. Using group cohomology
Epstein and Shub~\cite{ES} showed that there exists a monomorphism
$\varphi\colon\Gamma\to\Gamma$ that fits into the commutative
diagram
$$
\begin{CD}
0@>>>\Z^n @>>>\Gamma@>>> F@>>> 1\\
@.      @VVs\cdot IdV      @VV\varphi V           @VVId_FV   @.\\
 0@>>>\Z^n @>>>\Gamma@>>> F@>>> 1.
\end{CD}
$$
This implies that there exists an expanding map of $M$ that is
covered by an affine map of the form $x\mapsto s\cdot x+e_0$. Our
goal is to find an expanding map of $M$ that lifts to an origin
preserving expanding conformal map.

Define a function $\theta\colon F\to\qZ$ by
$$
\theta(g)\stackrel{\mathrm{def}}{=}s\cdot u_{\hat g}-u_{\varphi(\hat g)},
$$
where $\hat g$ is any lift of $g$ to $\Gamma$.

It is straightforward to check that $\theta$ is well defined and,
actually, a crossed homomorphism. It is also easy to see that
$\theta$, in fact, takes values in $\Z^n$. Indeed, recall that
$s\equiv 1\mod q$. Therefore
$$
\theta(g)\equiv u_{\hat g}-u_{\varphi(\hat g)}\equiv u_{\hat
g\circ\varphi(\hat g)^{-1}}\mod\Z^n.
$$
The latter expression is zero because $\hat g\circ\varphi(\hat
g)^{-1}$ is a pure translation.

Since the inclusion $\Z^n\hookrightarrow\qZ$ induces multiplication
by $q$ on cohomology and the abelian group $H^1(F,\qZ)$ has exponent
$q$~(see~\cite[p.85, 10.2]{Brown}), the composite
$$
\theta\colon F\to\Z^n\hookrightarrow\qZ
$$
is a principal crossed homomorphism; \ie there exists a vector
$v\in\qZ$ such that
$$\theta(g)=gv-v.$$
In fact, by a direct computation one can check that
$$
v=-\frac1q\sum_{g\in F}\theta(g).
$$

Now consider the conformal expanding affine transformation $E$ of
$\R^n$ defined by
$$
E(x)\stackrel{\mathrm{def}}{=}s\cdot x+v.
$$
\begin{claim}\label{claim2}
Conjugation by $E$ restricted to $\Gamma$ is $\varphi$; \ie
$$
\varphi(\gamma)=E\circ\gamma\circ E^{-1}
$$
for each $\gamma\in\Gamma$.
\end{claim}
\begin{proof}[The proof of Claim~\ref{claim2}]
We have the following sequence of equalities.
\begin{multline*}
E\circ\gamma\circ E^{-1}=(s\cdot x+v)\circ(g_\gamma
x+u_\gamma)\circ\left(\frac1s\cdot
x-\frac vs\right)=g_\gamma x-g_\gamma v+v+s\cdot u_\gamma\\
=g_\gamma x-\theta(g_\gamma)+s\cdot u_\gamma=g_\gamma x+s\cdot
u_\gamma-(s\cdot u_\gamma-u_{\varphi(\gamma)})=\varphi(\gamma).
\end{multline*}
\end{proof}
Consider the $q$-fold composition
$$
 E^q(x)=s^q\cdot x+\hat v, \;\;\;\mbox{where}\;\;\hat
v\stackrel{\mathrm{def}}{=}(s^{q-1}+s^{q-2}+\ldots +1)v.
$$
Denote by $\E$ the map of $M$ induced by $E^q$.
Since
$s^{q-1}+s^{q-2}+\ldots +1\equiv 0 \mod q$ we have that $\hat
v\in\Z^n$. Hence $\E(\mathbf o)=\mathbf o$, where $\mathbf o$
denotes the image of the new origin of $\R^n$ (after applying Claim~\ref{claim1})
under the covering projection $\R^n\to M$.

Now let $\tilde L\colon \R^n\to\R^n$ be a lift of $L$. It has the
form $x\mapsto Ax+x_0$ with respect to the new origin for $\mathbb
R^n$ (after applying Claim~\ref{claim1}), where $x_0\in\R^n$ and $A\in GL_n(\Z)$
is a hyperbolic matrix. Conjugation by $\tilde L$ induces an
automorphism $\psi\colon\Gamma\to\Gamma$ yielding the following
commutative diagram
$$
\begin{CD}
0@>>>\Z^n @>>>\Gamma@>>> F@>>> 1\\
@.      @VVAV      @VV\psi V           @VVV   @.\\
 0@>>>\Z^n @>>>\Gamma@>>> F@>>> 1.
\end{CD}
$$
Since $F$ is finite, by replacing $L$ and $\tilde L$ by some finite
powers, we can assume that the induced automorphism of $F$ is
identity. This implies that
$$
\forall\gamma\in\Gamma,\;\; A\circ g_\gamma=g_\gamma\circ
A.\eqno(\ast)
$$

Proceeding analogously to how we defined $\theta$ using $\varphi$,
we define $\omega\colon F\to\qZ$ by
$$
\omega(g)\stackrel{\mathrm{def}}{=}Au_{\hat g}-u_{\psi(\hat g)}.
$$
Again, it is straightforward to check that $\omega$ is well defined
and, due to~$(\ast)$, a crossed homomorphism.

Let $\bar A$ be the image of $A$ in the finite group $GL_n(\Z_q)$.
By replacing $L$ (and $\tilde L$ accordingly) by a further positive
power, we may also assume that $\bar A=Id$. We can check that the
image of $\omega$ is contained in $\Z^n$. Indeed, write
$$
Au_\gamma-u_{\psi(\gamma)}=(Au_\gamma-u_\gamma)+(u_\gamma-u_{\psi(\gamma)}).
$$
The first summand is in $\Z^n$ because $u_\gamma\in\qZ$ and $\bar
A=Id$. The second summand equals to
$u_{\gamma\circ\psi(\gamma)^{-1}}$ and belongs to $\Z^n$ since
$\gamma\circ\psi(\gamma)^{-1}$ is a pure translation.

Hence, just as for $\theta$, we get that
$$
\forall g\in F,\;\; \omega(g)=gw-w,
$$
where $w\in\qZ$.

Now consider the affine Anosov diffeomorphism $\A$ of $\R^n$ defined
by
$$
\A(x)\stackrel{\mathrm{def}}{=}Ax+w.
$$
\begin{claim}\label{claim3}
Conjugation by $\A$ restricted to $\Gamma$ is $\psi$, \ie
$$
\psi(\gamma)=\A\circ\gamma\circ\A^{-1}
$$
for each $\gamma\in\Gamma$.
\end{claim}
\begin{proof}[The proof of Claim~\ref{claim3}]
The claim follows from the computation below that uses the commutativity
property~$(\ast)$.
\begin{multline*}
\A\circ\gamma\circ\A^{-1}=(Ax+w)\circ(g_\gamma
x+u_\gamma)\circ(A^{-1}x-A^{-1}w)\\=A\circ g_\gamma\circ
A^{-1}x-A\circ g_\gamma\circ A^{-1}w+Au_\gamma+w =g_\gamma
x-g_\gamma w+w+Au_\gamma\\
=g_\gamma
x-\omega(g)+Au_\gamma=g_{\psi(\gamma)}x-(Au_\gamma-u_{\psi(\gamma)})+Au_\gamma=\psi(\gamma).
\end{multline*}
\end{proof}

Consider the $q$-fold composition
$$
\A^q(x)=A^q x+\hat w,\;\;\;\mbox{where}\;\; \hat
w\stackrel{\mathrm{def}}{=}A^{q-1}w+A^{q-2}w+\ldots +w.
$$
Denote by $\AA$ the Anosov diffeomorphism of $M$ induced by $\A^q$.
Since $\bar A=Id$ we have that $Au\equiv u\mod\Z^n$ for any
$u\in\qZ$. Thus $\hat w\equiv 0\mod\Z^n$; \ie $\hat w\in \Z^n$ and 
hence $\AA(\mathbf o)=\mathbf o$. (Recall that $\mathbf o\in M$ is
the image of $0\in\R^n$.) 

Therefore the conformal expanding endomorphism $\E\colon M\to M$ and
the Anosov diffeomorphism $\AA\colon M\to M$ have a common
fixed point $\mathbf o$, and hence commute.
\begin{claim}\label{claim4}
The diffeomorphism $\AA$ is affinely conjugate to a
positive power of $L$.
\end{claim}
\begin{proof}[The proof of Claim~\ref{claim4}]
It is clearly sufficient to find a vector $a\in\R^n$ such that the
translation $T\colon x\mapsto x+a$ has the following properties:
\begin{enumerate}
\item $T\circ\A^q\circ T^{-1}=\tilde L^q$,
\item $T\circ\gamma=\gamma\circ T$ for every $\gamma\in\Gamma$.
\end{enumerate}
Define vector $\hat u$ by the equation
$$
\tilde L^qx=A^qx+\hat u.
$$
Set $a=(Id-A^q)^{-1}(\hat u-\hat w)$. (Recall that $A$ has no
eigenvalues of length 1.) A straightforward calculation verifies the
first property. Another straightforward calculation shows that the
second property is equivalent to
$$
\forall \gamma\in\Gamma,\;\; g_\gamma a=a.
$$
Using~$(\ast)$ this is equivalent to
$$
\forall \gamma\in\Gamma,\;\; g_\gamma(\hat u-\hat w)=\hat u-\hat w.
$$
To verify this we use that
$$
\tilde L^q\circ\gamma=\psi^q(\gamma)\circ\tilde
L^q\;\;\;\mbox{and}\;\; \AA^q\circ\gamma=\psi^q(\gamma)\circ\AA^q
$$
for all $\gamma\in\Gamma$. Expanding these equations yields
$$
\hat u+A^qu_\gamma=g_{\psi^q(\gamma)}\hat
u+u_{\psi^q(\gamma)}\;\;\;\mbox{and}\;\; \hat
w+A^qu_\gamma=g_{\psi^q(\gamma)}\hat w+u_{\psi^q(\gamma)}
$$
for all $\gamma\in\Gamma$. And by subtracting the second equation
from the first yields
$$
\hat u-\hat w=g_{\psi^q(\gamma)}(\hat u-\hat w)=g_\gamma(\hat u-\hat
w).
$$
\end{proof}
Denote by $\TT$ the conjugacy between $\AA$ and $L^q$ induced by
$T$. To complete the proof we expand $\E\circ\AA=\AA\circ\E$ as
$$
\E\circ\TT^{-1}\circ L^q\circ \TT=\TT^{-1}\circ L^q\circ \TT\circ
\E,
$$
which means that $L^q$ commutes with the expanding endomorphism
$\TT\circ\E\circ\TT^{-1}$.
\end{proof}
\begin{remark}\label{rmk432}
In the course of the proof of Proposition~\ref{prop3} we have passed
to a positive finite power of $L$ twice (without changing the
notation for $L$) to guarantee that the induced homomorphism of $F$
is identity and $\bar A\in GL_n(\Z_q)$ is the identity matrix.
Therefore the actual power of the initial Anosov diffeomorphism that
commutes with the constructed expanding endomorphism may be larger than $q$.
Note also that if we change for a second time
the origin of $\mathbb R^n$ to a point lying over $\TT(\mathbf o)$ then $\TT\circ\E\circ\TT^{-1}$ and $L^q$ are an expanding
endomorphism and an Anosov automorphism, respectively. 
\end{remark}

\subsubsection{The construction of an Anosov automorphism of an infratorus with holonomy group $F$}

Here we present a construction of a codimension $k-1$ Anosov
automorphism. This is a generalization of a construction
in~\cite{Port}, where an equivalent construction for $s=2$ (see below) was carried out.

Let $m\stackrel{\mathrm{def}}{=}k-1$. Since $m\ge k(q)$ we can find an infratorus of
dimension $m$ whose holonomy group is $F$. Let $\Gamma$ be its
fundamental group. Then, as before, we have an exact sequence
$$
0\longrightarrow\Z^m \longrightarrow\Gamma\longrightarrow
F\longrightarrow 1.
$$

Fix an integer $s>1$ and let $A_1\in GL_s(\Z)$ be a matrix
representing a codimension one Anosov automorphism of $\T^s$.
See~\cite[Lemma~1.1]{FJ} where such a matrix $A_1$ is constructed.
Then the $m$-fold product
$$
A_m\stackrel{\mathrm{def}}{=}Id\otimes A_1\colon\Z^m\otimes\Z^s\to
\Z^m\otimes\Z^s
$$
is an Anosov matrix $A_m\in GL_{ms}(\Z)$ representing a codimension
$m$ Anosov automorphism of $\T^{ms}$.

The group $F$ acts faithfully on $\Z^m\otimes\Z^s$ by $g\otimes Id$ for
each $g\in F$ and $A_m$ obviously commutes with this action.

The matrix $A_m$ induces an automorphism
$$
(A_m)_*\colon H^2(F, \Z^m\otimes\Z^s)\to H^2(F, \Z^m\otimes\Z^s),
$$
which has a finite order since $H^2(F, \Z^m\otimes\Z^s)$ is a finite
group. Therefore some positive power of $(A_m)_*$ is identity.
Hence, after replacing $A_1$ by some positive power, we may assume that
$(A_m)_*=Id$.

Now consider an $s$-fold product
$\Gamma^s=\Gamma\times\Gamma\times\ldots \times\Gamma$. It fits into
an exact sequence
$$
0\longrightarrow(\Z^m)^s
\longrightarrow\Gamma^s\stackrel{p}{\longrightarrow}
F^s\longrightarrow 1.
$$
Let $\Gamma^{(s)}=p^{-1}(F)$, where $F$ is identified with the
diagonal subgroup of $F^s$.

Then we have a short exact sequence
$$
0\longrightarrow\Z^{ms} \longrightarrow\Gamma^{(s)}\longrightarrow
F\longrightarrow 1, \eqno(\star)
$$
where $\Gamma^{(s)}$ is torsion free and the diagonal action of $F$
on $(\Z^m)^s$ is faithful. Also note that $\Z^{ms}$ can be
identified with $\Z^m\otimes\Z^s$ in such a way that the diagonal
action of $F$ becomes the action described above on
$\Z^m\otimes\Z^s$. Recall that the extension~$(\star)$ determines an
element $\theta\in H^2(F,\Z^k)$. And, since
$(A_m)_*(\theta)=\theta$, there is an automorphism
$A\colon\Gamma^{(s)}\to\Gamma^{(s)}$ such that the following diagram
commutes

$$
\begin{CD}
0@>>>\Z^m\otimes\Z^s @>>>\Gamma^{(s)}@>>> F@>>> 1\\
@.      @VVA_mV      @VVAV           @VVId_FV   @.\\
 0@>>>\Z^m\otimes\Z^s @>>>\Gamma^{(s)}@>>> F@>>> 1
\end{CD}
$$
(see p.~94 of~\cite{Brown}).

By the Bieberbach theorems $\Gamma^{(s)}=\pi_1(M_F)$, where $M_F$ is
a $ms$-dimensional infratorus with holonomy group $F$. Furthermore,
$A$ is induced by a codimension $m$ affine Anosov diffeomorphism of
$M_F$ which we denote by $A$ as well. After changing, if needed, the 
origin $0$ of the affine space $\R^n$, $A(0)=0$. Hence $A$ induces
an Anosov automorphism of $M_F$.

Note that if $s$ is even then $\Gamma^{(s)}$ acts by orientation
preserving transformations. Therefore $M_F$ is orientable when $s$
is even.

\subsubsection{The proof of Proposition~\ref{prop2}}

It follows from the proof of Theorem~$3'''$ of \cite{FO} that given
$k, u\in\Z^+$ there exists $d(k,u)$ such that for all integers $d>d(k,u)$
there is an element in the Gromoll group $\Gamma_{k+1}^{4d+3}$ of an
odd order larger than $u$.

Take $u=q$ and apply construction of the previous subsection with
$sm=s(k-1)>4d(k,u)$ and $s$ even. This gives a codimension $k-1$
Anosov automorphism $A\colon M_F\to M_F$. Let $L_1\colon\mathbb
T^\sigma\to\mathbb T^\sigma$ be a codimension 1 Anosov automorphism.
Choose $\sigma\in\{2,3,4,5\}$ so that $s(k-1)+\sigma\equiv 3\mod 4$.

Let $M\stackrel{\mathrm{def}}{=}M_F\times \mathbb T^\sigma$ and $L=A\times L_1$. Then $L$ is
a codimension $k$ Anosov automorphism of the orientable infratorus
$M$ with holonomy group $F$. By the construction, $\Gamma_{k+1}^{\dim
M}$ has an element $\Sigma$ of order larger than $q$. 

Let $N\stackrel{\mathrm{def}}{=}M\#\Sigma$, then $N$ is not diffeomorphic to $M$ as well as
to any other infranilmanifold by Proposition~\ref{prop1}. By
Theorem~\ref{thm_main} and Proposition~\ref{prop3}, $N$ supports a codimension $k$
Anosov diffeomorphism; cf.~Remark~\ref{rmk432}. Finally,
Proposition~\ref{prop0} yields that $N$ is irreducible.

\section{Appendix: Irreducible smooth structures}

This additional section is devoted to proving the following result.

\begin{theorem}
\label{thm_appndx}
 Let $M$ be an $n$-dimensional closed oriented infranilmanifold, $n\ge 7$,
and $\Sigma$ be an exotic $n$-sphere. If $M\#\Sigma=N_1\times N_2$, \ie is a smooth
Cartesian product, where $\dim N_i\ge 1$, $i=1,2$, then $M\#\Sigma$ is diffeomorphic to $M$. 
\end{theorem}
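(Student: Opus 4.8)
The plan is to strip away the homotopy‑theoretic content by recognizing the factors $N_1,N_2$, up to homeomorphism, as honest infranilmanifolds, and then to compare the two resulting smooth structures on a single fixed topological manifold by smoothing theory. The mechanism will be that a ``connected sum with an exotic sphere'' smooth structure is detected in the top smash summand of $[\,\cdot\,,\mathrm{Top}/O]$, whereas a smooth \emph{product} structure lives in the complementary summands.

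\textit{Step 1 (reduction to a product of infranilmanifolds).} Since $M\#\Sigma$ is homeomorphic to $M$ it is aspherical, so the universal cover of $N_1\times N_2$ is contractible; being the product $\widetilde N_1\times\widetilde N_2$, each $\widetilde N_i$ is contractible, hence each $N_i$ is closed and aspherical. Writing $\Gamma=\pi_1(M)\cong\pi_1(N_1)\times\pi_1(N_2)=\Gamma_1\times\Gamma_2$, each $\Gamma_i$ is a finitely generated torsion‑free virtually nilpotent group, hence (by the structure theory of almost‑crystallographic groups; cf.\ \cite{LR}) the fundamental group of a closed infranilmanifold $P_i$ with $\dim P_i=\mathrm{cd}(\Gamma_i)=\dim N_i$. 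By topological rigidity of infranilmanifolds --- \cite{FH} in dimension $\ge 5$, Freedman--Quinn in dimension $4$ (virtually nilpotent groups are good), classical results below --- $N_i$ is homeomorphic to $P_i$. As $P_1\times P_2$ is an infranilmanifold with $\pi_1\cong\Gamma\cong\pi_1(M)$, the theorem of \cite{LR} cited earlier shows $P_1\times P_2$ is diffeomorphic to $M$; replacing $M$ by $P_1\times P_2$, I will assume henceforth that $M=P_1\times P_2$ and that $N_i$ is homeomorphic to $P_i$.

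\textit{Step 2 (smoothing‑theoretic comparison).} Now $M\#\Sigma=(P_1\times P_2)\#\Sigma$ and $N_1\times N_2$ are \emph{the same} smooth manifold, hence the same smoothing of the topological manifold $X=|P_1\times P_2|$; since $\dim X=n\ge 7$, concordant smoothings of $X$ are diffeomorphic (by the $s$‑cobordism theorem), so it suffices to show these two smoothings agree in the group $h^0(X):=[X,\mathrm{Top}/O]$ of concordance classes of smoothings, with $P_1\times P_2$ as the zero. The smoothing $(P_1\times P_2)\#\Sigma$ equals $c^*\sigma$, where $\sigma=[\Sigma]\in\Theta_n\cong\pi_n(\mathrm{Top}/O)$ and $c\colon X\to S^n$ collapses the complement of the disk along which $\Sigma$ is glued; choosing this disk to be a product $D^{n_1}\times D^{n_2}$ of disks in the factors, $c$ factors as $X=P_1\times P_2\to P_1\wedge P_2\to S^{n_1}\wedge S^{n_2}=S^n$. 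By the product formula of smoothing theory together with the homeomorphisms $N_i\approx P_i$, the smoothing $N_1\times N_2$ equals $\mathrm{pr}_1^*\tau_1+\mathrm{pr}_2^*\tau_2$ with $\tau_i\in h^0(P_i)$ the smoothing obstruction of $N_i$. Using the stable splitting $X_+=(P_1\times P_2)_+\simeq S^0\vee P_1\vee P_2\vee(P_1\wedge P_2)$ one obtains a direct sum decomposition
\[
h^0(X)\;\cong\;h^0(\mathrm{pt})\oplus\widetilde h^0(P_1)\oplus\widetilde h^0(P_2)\oplus\widetilde h^0(P_1\wedge P_2),
\]
in which $h^0(\mathrm{pt})=\pi_0(\mathrm{Top}/O)=0$, the classes $\mathrm{pr}_1^*\tau_1$ and $\mathrm{pr}_2^*\tau_2$ lie in the $\widetilde h^0(P_1)$ and $\widetilde h^0(P_2)$ summands respectively (as $\mathrm{pr}_i$ is split by the slice inclusion $P_i\hookrightarrow X$), and $c^*\sigma$ lies in the $\widetilde h^0(P_1\wedge P_2)$ summand (because $c$ factors through $P_1\wedge P_2$). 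Equating $c^*\sigma=\mathrm{pr}_1^*\tau_1+\mathrm{pr}_2^*\tau_2$ therefore forces $c^*\sigma=0$; hence $M\#\Sigma$ is concordant, and so diffeomorphic, to $M$.

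\textit{Expected main obstacle.} The reductions of Step~1 are routine once the realization and rigidity results for infranilmanifolds are in hand, and the bookkeeping with the stable splitting in Step~2 is clean. The delicate point is the precise form of the product formula for smoothings when a factor $P_i$ is $4$‑dimensional, where $h^0(P_i)$ need not classify smooth structures on $|P_i|$; this should be harmless --- the product formula is a statement about the smoothing‑obstruction maps $P_i\to\mathrm{Top}/O$ and remains valid on the product $X$, whose dimension is $\ge 5$, so the conclusion $c^*\sigma=0$ is unaffected --- but making this rigorous, and pinning down that $c^*$ genuinely is the ``top smash summand'' map, is where the care is needed.
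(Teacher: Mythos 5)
Your Step 1 matches the paper's reduction. Step 2 takes a genuinely different route: the paper shows $[f_1]=0$ and $[f_2]=0$ one at a time by restricting the product smoothing to a tube $M_1\times\R^s$ and invoking the Product Structure Theorem (and handles low-dimensional factors by first stabilizing with $\R^{10}$), whereas you decompose $[M,\mathrm{Top}/O]$ via the stable splitting of $(P_1\times P_2)_+$ and observe that a connected-sum class lands in the $P_1\wedge P_2$ summand while a product class lands in the complementary $P_1$ and $P_2$ summands. This is a cleaner, more global argument, and it absorbs the low-dimensional factors without a separate case.

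There is, however, a real gap at the pivot. You assert that $M\#\Sigma$ and $N_1\times N_2$ ``are the same smooth manifold, hence the same smoothing of $X$.'' A smoothing of $X$ is a homeomorphism $\varphi\colon Y\to X$ modulo concordance, not merely the abstract diffeomorphism type of $Y$; here there are two distinct homeomorphisms to $X=P_1\times P_2$ in play, namely $\sigma_M$ (the connected-sum identification composed with the Lee--Raymond diffeomorphism) and $f_1\times f_2$, and these are \emph{a priori} only homotopic. That homotopic homeomorphisms into an infranilmanifold are concordant is a nontrivial rigidity statement which the paper gets by invoking Farrell--Hsiang \cite{FH} a second time; without it the equation $c^*\sigma=\mathrm{pr}_1^*\tau_1+\mathrm{pr}_2^*\tau_2$ is unjustified and the splitting computation never gets started. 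A secondary point: the product formula $[\widehat{f_1\times f_2}]=\mathrm{pr}_1^*[\widehat{f_1}]+\mathrm{pr}_2^*[\widehat{f_2}]$ is true (Whitney-sum additivity of the obstruction to vector-bundle reduction of the stable topological tangent microbundle), but it is not among the four properties of the $\EuScript S(\cdot)\cong[\cdot,\mathrm{Top}/O]$ correspondence that the paper uses and should be cited or proved. You flag the $4$-dimensional factor as the delicate spot; in fact your splitting argument handles that automatically once $\tau_i$ is interpreted as a tangential obstruction class, and the genuine issues are the two just named.
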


Before proving this result, we need some pertinent facts from smoothing theory which can be found in~\cite{KS}.

Let $Y$ be a smooth $n$-dimensional manifold, where $n\ge 5$. A {\it smooth structure}
on $Y$ is a homeomorphism $\varphi\colon X\to Y$, where $X$ is a smooth manifold. Two such structures
$$
\varphi_i\colon X_i\to Y, i=1,2
$$
are {\it concordant}, if there exists a smooth manifold $W$ and a homeomorphism
$$
\Phi\colon W\to Y\times[0,1],
$$
such that $\partial W=X_1\sqcup X_2$ and $\Phi|_{X_i}=\varphi_i$.

Let $[\varphi]$ denote the concordance class of $\varphi$ and $\EuScript S(Y)$ be the set of all such classes.
Then $\EuScript S(Y)$ is in natural bijective correspondence with $[Y, Top/O]$, where $Top/O$ is an infinite loop
space and $[Y, Top/O]$ denotes the set of all homotopy classes of continuous maps from $Y$ to $Top/O$.
Note that $[Y, Top/O]$ is an abelian group. In this way, $\EuScript S(Y)$ acquires an abelian group structure.
Given a smooth structure $\varphi\colon X\to Y$, we let  $\widehat\varphi\colon Y\to Top/O$ denote a representative 
of the corresponding homotopy class. Here are some properties of this correspondence.
\begin{enumerate}
 \item $\widehat{Id}_Y$ is homotopic to a constant map and $[Id_Y]=0$.
\item Let $\sigma\colon\Sigma\to S^n$ be an exotic sphere and $\sigma_Y\colon Y\#\Sigma\to Y$ be the usual 
homeomorphism then $[\widehat{\sigma}_Y]=[\widehat\sigma\circ f_Y]$, where $f_Y\colon Y\to S^n$ is a degree one map.
\item  Let $\alpha\colon U\to Y$ denote the inclusion of an open subset $U\subset Y$, then $[\widehat\varphi\circ{\alpha}]=[\widehat\varphi_U]$, where $\varphi_U\colon\varphi^{-1}(U)\to U$ is the restriction
of $\varphi$ to $\varphi^{-1}(U)$.
\item {\bf Product Structure Theorem.} The homeomorphism $\varphi\times Id_{\mathbb R^m}\colon X\times\R^m\to
Y\times\R^m$ is a smooth structure on $Y\times\R^m$, and the map $[\varphi]\to[\varphi\times Id_{\mathbb R^m}]$
is a bijection of smooth structure sets $\EuScript S(Y)\to\EuScript S(Y\times\R^m)$. Furthermore,
$[\widehat{\varphi\times Id_{\mathbb R^m}}]=p^*[\widehat\varphi]$, where $p\colon Y\times\R^m\to Y$ denotes 
projection onto the first factor.
\end{enumerate}

We now start the proof of Theorem~\ref{thm_appndx}. Since $\pi_1(M)=\pi_1(N_1)\times\pi_1(N_2)$ and 
$\pi_1(M)$ is torsion-free, finitely generated and
virtually nilpotent group, it follows that $\pi_1(N_1)$ and $\pi_1(N_2)$ are also torsion-free, finitely generated and virtually nilpotent groups. By Mal$'$cev's work~\cite{M} (cf.~\cite[p.231]{W}) any such group is the fundamental group of a closed infranilmanifold. Hence there exist closed infranilmanifolds
$M_1$ and $M_2$ with $\pi_1(M_i)=\pi_1(N_i)$, $i=1,2$. Note that $N_i$, $i=1,2$, are asphericial. 
Therefore, using~\cite[Theorem~6.3]{FH}, we obtain homeomorphisms $f_i\colon
N_i\to M_i$. (Theorem~5.1 of~\cite{FH} (of which Theorem~6.3 is a corollary) was extended to dimension 4  by~\cite[Section~11.5]{FQ} and follows from results of Perelman 
(see \eg~\cite{CZ}) in dimension~3.) Note that $M$ and $M_1\times M_2$ are both closed infranilmanifolds with a specified 
isomorphism between their fundamental groups. Now the smooth rigidity result of  Lee and 
 Raymond~\cite{LR} yields a diffeomorphism
$M_1\times M_2\to M$ which makes the following diagram commute up to homotopy:
$$
\begin{CD}
 M\#\Sigma @>\sigma_M>> M\\
@|@AAA\\
N_1\times N_2 @>>{f_1\times f_2}> M_1\times M_2
\end{CD}
$$
We also orient $N_i$ and $M_i$, $i=1,2$, so that all pertinent maps
are orientation preserving and identify
$M_1\times M_2$ with $M$ by the vertical diffeomorphism
in the diagram. Using~\cite{FH} again, we see that $\sigma_M$ is concordant
to $f_1\times f_2$; that is,
$$
[\sigma_M]=[f_1\times f_2] \;\;\;\;\mbox{in}\;\;\EuScript S(M).
$$

We now complete the proof under the additional assumption that $\dim N_i\ge 5$ for $i=1$ and $i=2$.
After doing this, we will indicate the modifications needed to prove  Theorem~\ref{thm_appndx}  when this assumption
is dropped.

Identify $\R^s$ with an open ball in $M_2$, where $s=\dim M_2$. We intend to apply property 3 to the
inclusion 
$$
\alpha\colon U\stackrel{\mathrm{def}}{=}M_1\times\R^s\to M_1\times M_2=M
$$
and the smooth structure
$$
\varphi\stackrel{\mathrm{def}}{=}f_1\times f_2\colon N_1\times N_2\to M_1\times M_2.
$$
Notice, in this situation, that
$$
\varphi_U\colon\varphi^{-1}(U)\to U
$$
is the same as
$$
f_1\times(f_2|_V)\colon N_1\times V\to M_1\times \R^s,
$$
where $V=f_2^{-1}(\R^s)$. Moreover the Product Structure Theorem (property~4) yields that
$$
[f_1]=0\;\;\;\mbox{if and only if}\;\;\; [f_1\times f_2|_V]=0.
$$
To see that $[f_1\times f_2|_V]=0$, we recall that $f_1\times f_2$ is concordant to $\sigma_M$. This fact together with property~2 yields that
$$
[\widehat{f_1\times f_2}]=[\widehat{\sigma}_M]=[\widehat\sigma\circ f_M]\;\;\;\mbox{in}\;\;\; [M, Top/O].
$$
Therefore property~3 yields that
$$
[\widehat{f_1\times (f_2|_V)}]=[\widehat{f_1\times f_2}\circ\alpha]=[\widehat\sigma\circ f_M\circ\alpha]\;\;\;\mbox{in}\;\;\; [M_1\times V, Top/O].
$$
But $f_M\circ \alpha\colon M_1\times V\to S^n$ is homotopic to a constant map since $V$ is homotopic to a point and $\dim M_1<n$. Therefore $[f_1\times(f_2|_V)]=0$ and consequently $[f_1]=0$. Property~1 implies that $f_1$ is homotopic to a 
diffeomorphism $\bar f_1\colon N_1\to M_1$. And a completely analogous argument shows that $f_2$ is also homotopic to a 
diffeomorphism $\bar f_2\colon N_2\to M_2$. Consequently, $N_1\times N_2=M\#\Sigma$ is diffeomorphic to
$M_1\times M_2=M$ which is the posited result.

We finish by briefly indicating how to modify the above argument to complete the proof in general; \ie after dropping 
the assumption that $\dim N_1\ge 5$, $i=1,2$.

For this purpose, consider the smooth structure
$$
(M\#\Sigma)\times\R^{10}\stackrel{\sigma_M\times Id_{\mathbb R^{10}}}{\xrightarrow{\hspace*{1cm}}} M\times\R^{10}.
$$
Because of the Product Structure Theorem, it suffices to show that
$[\sigma_M\times Id_{\mathbb R^{10}}]=0$ in $\EuScript S(M\times\R^{10})$. This is accomplished
by showing that
$$
[f_i\times Id_{\mathbb R^5}]=0 \;\;\;\;\mbox{in}\;\;\;\EuScript S(M_i\times\R^5),\;i=1,2,
$$
which is proven by an argument similar to the one given above which verified that
$[f_i]=0$ in $\EuScript S(M_i)$ when $\dim M_i\ge 5$, $i=1,2$.

\end{document}